\documentclass[11pt,a4paper]{article}
\usepackage[T1]{fontenc}
\usepackage[utf8]{inputenc}
\usepackage{lmodern}
\usepackage{amsmath,amssymb,amsthm,mathtools}
\usepackage{booktabs,tabularx,array}
\usepackage{microtype,xurl}
\usepackage[margin=0.88in]{geometry}
\usepackage{xcolor}
\usepackage{cite}
\usepackage{caption}
\usepackage{needspace}
\definecolor{linkblue}{RGB}{26,52,108}
\usepackage[colorlinks=true,linkcolor=linkblue,citecolor=linkblue,urlcolor=linkblue]{hyperref}
\usepackage[nameinlink,noabbrev]{cleveref}
\hypersetup{pdftitle={Geodesically Convex Optimization in Hyperbolic Space: Matching Lower Bounds},pdfauthor={},pdfsubject={Unrestricted deterministic first-order oracle complexity},pdfkeywords={geodesic convexity, oracle complexity, lower bounds, upper bounds, hyperbolic geometry}}
\allowdisplaybreaks[1]
\newtheorem{theorem}{Theorem}[section]
\newtheorem{lemma}[theorem]{Lemma}
\newtheorem{proposition}[theorem]{Proposition}
\newtheorem{corollary}[theorem]{Corollary}
\theoremstyle{definition}

\theoremstyle{remark}
\newtheorem{remark}[theorem]{Remark}
\DeclareMathOperator{\dist}{dist}

\DeclareMathOperator{\arcosh}{arcosh}

\DeclareMathOperator{\gconv}{gconv}
\DeclareMathOperator{\spanop}{span}
\newcommand{\HH}{\mathbb H}
\newcommand{\RR}{\mathbb R}
\newcommand{\eps}{\varepsilon}
\newcommand{\ip}[2]{\langle #1,#2\rangle}
\newcommand{\ipL}[2]{\langle #1,#2\rangle_{\mathrm L}}
\newcommand{\norm}[1]{\lVert #1\rVert}
\newcommand{\B}{\overline B}
\newcolumntype{Y}{>{\raggedright\arraybackslash}X}
\title{\textbf{Geodesically Convex Optimization in Hyperbolic Space: Matching Lower Bounds}}
\newcommand{\authornames}{Wendao Wu, Haihan Zhang, Chenheng Zhang, Yanyi Li, Chunyuan Zheng, Cong Fang, Haoxuan Li, Zhouchen Lin}
\newcommand{\affiliation}{Peking University}
\newcommand{\contactemail}{colombo.2026@outlook.com}
\newcommand{\mail}[1]{\href{mailto:#1}{\texttt{#1}}}
\newif\ifanonymous
\anonymousfalse
\ifanonymous
\author{Anonymous Authors}
\hypersetup{pdfauthor={Anonymous Authors}}
\else
\author{Wendao Wu$^{1,*}$\quad Haihan Zhang$^{1,*}$\quad Chenheng Zhang$^{1,*}$\\[0.20em]
Yanyi Li$^{1}$\quad Chunyuan Zheng$^1$\\[0.20em]
Cong Fang$^{1,\dagger}$\quad Haoxuan Li$^{1,\dagger}$\quad Zhouchen Lin$^{1,\dagger}$\\[0.60em]
\small $^1$\affiliation\\[0.35em]
\footnotesize\mail{\contactemail}\\[0.35em]
\small $^*$Equal contribution.\quad $^\dagger$Corresponding authors.}
\hypersetup{pdfauthor={\authornames}}
\fi
\date{}
\begin{document}
\maketitle
\begin{abstract}
We establish matching lower bounds for deterministic first-order optimization of globally Lipschitz geodesically convex functions on hyperbolic space. A global minimizer lies within distance $r$ of a known point, each query returns an exact function value and a selected Riemannian subgradient, and the target objective gap is $\eps Lr$, where $L$ is the Lipschitz constant. For sectional curvature $-k^2<0$, let $\rho=kr$ and $\zeta=\rho/\tanh\rho$. The optimal worst-case query complexity $Q^\star$, taken uniformly over dimensions, satisfies
\[
 Q^\star(\rho,\eps)=\Theta(\zeta\eps^{-2}),
 \qquad \rho>0,\quad 0<\eps\le1/64.
\]
The new lower bound covers arbitrary adaptive deterministic queries and arbitrary outputs, matching the projected-subgradient upper rate of Zhang and Sra~\cite{ZS16}. Building on the nested hyperbolic geometry of Criscitiello and Boumal~\cite[Appendix~A.2]{CB23}, it removes the geodesic-span and historical-halfspace restrictions of their matching lower bound and resolves their conjecture for the Lipschitz problem. The construction extends a radial source from a convex carrier with boundary. A small residual cut selects minimizer candidates while an exact hyperbolic persistence inequality keeps every historical contact valid under future carrier growth. One fixed globally convex objective and one fixed subgradient selection consequently realize all exact replies simultaneously. Each query consumes $O(\eps^2)$ of a geometric potential whose available budget is $\Theta(\zeta)$, yielding the product lower bound in a dimension linear in the query horizon.
\end{abstract}
\noindent\textbf{Keywords:} geodesic convexity; oracle complexity; lower bounds; upper bounds.

\begin{samepage}
\noindent\textbf{AI Usage.}
Nearly the entire research pipeline for this paper was carried out by Colombo, our laboratory’s internal auto-research system, powered by GPT-5.6 Sol. The system also conducted a Lean-backed article audit of the resulting manuscript. The authors subsequently reviewed and approved the mathematical claims, presentation, and formal artifacts, and take responsibility for the final manuscript. The complete Lean audit report and the system’s technical report will be made public at a later date.  \par
\end{samepage}

\paragraph{Publication Intent and Recognition of Independent Work.}

This preprint is released as a research output of our study of
automated scientific research, for public scrutiny and discussion.
As we do not intend to submit this manuscript to a journal or
conference, it has not undergone the full editorial refinement
and internal review we would normally require for submission.
The authors remain responsible for its contents and for correcting
any errors or inaccurate attribution.

We recognize that releasing these results may affect researchers
who have independently been pursuing the same questions, including
work that is not yet public.
If this applies to your work, we invite you to contact us at
\texttt{colombo.2026@outlook.com}.
A brief description of the overlap, together with any dated
materials you are comfortable sharing, would help us understand
its scope and chronology.
Such materials might include draft excerpts, presentation slides,
or revision records; a complete manuscript or unpublished proof
is not required to begin the conversation.

We will review these cases carefully and update citations,
acknowledgments, and accounts of independent or concurrent work
as appropriate, including revising priority claims where warranted.
Nonpublic materials will be treated confidentially and used only
to assess the overlap unless their authors authorize otherwise.
We will obtain explicit permission from the authors before
publicly discussing such materials or providing them to an
automated research system.

\section{Introduction}\label{sec:introduction}

Consider a globally $L$-Lipschitz geodesically convex function
$f:\HH^d\to\RR$ on hyperbolic space of sectional curvature $-1$.
A global minimizer is known to lie in the closed ball $\B(o,R)$ around
a specified point $o$. Given exact function values and selected Riemannian
subgradients, the task is to return a point $\widehat x$ such that
\[
 f(\widehat x)-\min f\le\eps LR,
 \qquad 0<\eps<1.
\]
The algorithm may choose any point of the ambient space at each query.
We ask for the optimal worst-case number of queries among deterministic
algorithms, uniformly over dimensions. This formulation isolates the
information required to optimize from the arithmetic cost of manipulating
the known geometry.

The relevant geometric parameter is $\zeta_R=R/\tanh R$. For curvature
$-k^2$ and physical localization radius $r$, metric rescaling gives $R=kr$.
Projected Riemannian subgradient descent achieves
$O(\zeta_R\eps^{-2})$ queries~\cite{ZS16}. Its curvature factor approaches
one as $R$ tends to zero and grows proportionally to $R$ at large radius.
The resulting penalty could reflect either a limitation of the method
or an unavoidable information cost. Distinguishing these possibilities
requires a lower bound for arbitrary adaptive algorithms.

Criscitiello and Boumal~\cite{CB23} formulate the dimension-uniform
nonsmooth problem as P2. On a shared high-accuracy regime, their general
lower bounds give
\begin{equation}\label{eq:prior-benchmark}
 \Omega\!\left(\frac{\zeta_R}{\log(64\zeta_R)}
          +\frac{1}{\zeta_R^2\eps^2}\right).
\end{equation}
They also establish $\Omega(\zeta_R\eps^{-2})$ for algorithms satisfying
geodesic-span and historical-halfspace restrictions. The unrestricted
benchmark in \Cref{eq:prior-benchmark} leaves a polynomial gap in the joint
dependence on curvature and accuracy. Its curvature and accuracy terms
arise from separate constructions.

Criscitiello and Boumal~\cite{CB23} conjectured the multiplicative
curvature--accuracy rate and laid the mathematical foundation for the
present result. Their formulation of P2 and their matching lower bound
under the above restrictions identify the problem and the target rate.
Their nested hyperbolic construction~\cite[Appendix~A.2]{CB23}, including
the candidate spheres, recentering identities, and slow decay of the
uncertainty radii, supplies the geometric foundation of our proof and
the mechanism producing the multiplicative curvature factor. Our
AI-assisted work builds on these contributions to resolve the
unrestricted deterministic problem through an evolving convex carrier
and contact-persistence certificates.

\paragraph{Our contribution.}
We prove the unrestricted product lower bound. If $Q^\star(R,\eps)$ is
the dimension-uniform deterministic query complexity, then
\Cref{thm:main} establishes
\[
 Q^\star(R,\eps)=\Theta(\zeta_R\eps^{-2})
 \quad\text{for every }R>0\text{ and }0<\eps\le1/64,
\]
with universal constants. Thus the curvature penalty must multiply the
entire nonsmooth accuracy cost. The lower bound holds for a fixed
single-valued oracle returning exact values and full selected
subgradients; it permits unqueried final outputs. The matching upper
rate is inherited from projected subgradient descent. We give a
self-contained proof of that benchmark to make the oracle and output
conventions explicit.

The central technical step added here is a simultaneous global completion
of the oracle transcript on this existing geometric foundation. We
combine the nested candidate spheres of Criscitiello and Boumal with a
convex carrier with boundary. Every new query creates a supporting
contact; whenever needed, a small residual cut ensures that its normal
remains valid under all future updates. Retaining the convex hull of all historical public balls
then preserves every exact reply for one final objective. The geometric
budget is $\log\sinh S$, where $S$ is the remaining candidate radius:
each query spends at most $O(\eps^2)$, while halving the original radius
requires $\Theta(\zeta_R)$. \Cref{sec:overview} explains this mechanism
before the detailed construction.

\begin{table}[t]
\caption{Deterministic exact first-order query bounds for objective gap $\eps LR$, with $\zeta_R=R/\tanh R$. Dimension-uniform and dimension-dependent guarantees are listed separately.}
\label{tab:comparison}
\centering
\small
\setlength{\tabcolsep}{5pt}
\renewcommand{\arraystretch}{1.18}
\begin{tabularx}{\textwidth}{@{}>{\raggedright\arraybackslash}p{0.225\textwidth}>{\raggedright\arraybackslash}p{0.365\textwidth}Y@{}}
\toprule
\textbf{Work / bound type} & \textbf{Query complexity} & \textbf{Algorithms and range}\\
\midrule
\multicolumn{3}{@{}l}{\textbf{Dimension-uniform bounds}}\\[5pt]
Zhang--Sra~\cite{ZS16}\newline
\textit{Upper bound}
& $O(\zeta_R\eps^{-2})$
& Projected subgradient descent;\newline
  $R>0$, $0<\eps<1$.\\[7pt]
Criscitiello--Boumal~\cite{CB23}\newline
\textit{Lower bound}
& $\displaystyle\Omega\!\left(\frac{\zeta_R}{\log(64\zeta_R)}
                       +\frac{1}{\zeta_R^2\eps^2}\right)$
& Unrestricted queries;\newline
  $R\ge64$, $0<\eps\le\dfrac{1}{2^{18}\log^2\zeta_R}$.\\[7pt]
Criscitiello--Boumal\newline
\cite[Theorem~29]{CB23}\newline
\textit{Lower bound}
& $\Omega(\zeta_R\eps^{-2})$
& Geodesic-span and historical-halfspace restrictions (A1/A2);\newline
  $0<\eps\le1/(4\sqrt2)$.\\[7pt]
\textbf{This work}\newline
(\Cref{thm:main})\newline
\textit{New lower bound}
& \boldmath$\Omega(\zeta_R\eps^{-2})$
& \textbf{Unrestricted deterministic};\newline
  $R>0$, $0<\eps\le1/64$.\\[3pt]
\midrule
\multicolumn{3}{@{}l}{\textbf{Dimension-dependent bound}}\\[5pt]
Zhang et al.~\cite{Klein26}\newline
\textit{Upper bound}
& $O\!\left(d^2[R+\log(e/\eps)]\right)$
& Klein-model cutting planes;\newline
  $R>0$, $0<\eps<1$.\\
\bottomrule
\end{tabularx}
\par\vspace{5pt}
\begin{minipage}{\textwidth}
\footnotesize
\textbf{Matching rate.} The new lower bound and the Zhang--Sra upper bound give
$Q^\star(R,\eps)=\Theta(\zeta_R\eps^{-2})$ for $R>0$ and $0<\eps\le1/64$.
\par\vspace{2pt}
\textit{Scope.} Lower-bound dimensions may grow with the query horizon.
The accuracy term $\Omega(\zeta_R^{-2}\eps^{-2})$ in the unrestricted
Criscitiello--Boumal bound holds for every $R>0$ when nontrivial.
\end{minipage}
\end{table}

\paragraph{Dimension and scope.}
\Cref{tab:comparison} separates dimension-uniform and dimension-dependent
guarantees. The lower construction uses $d=T+2$ at horizon
$T=\Theta(\zeta_R\eps^{-2})$. In a fixed dimension, the Klein-model
cutting-plane method of Zhang et al.~\cite{Klein26} can have a much smaller
accuracy dependence. This is compatible with a worst-case guarantee
uniform over all dimensions. Our theorem addresses the deterministic
nonsmooth problem; its accuracy range and the distinct questions of
randomization and dimension dependence are discussed in \Cref{sec:scope}.

\paragraph{Organization.}
\Cref{sec:related} discusses the surrounding literature.
\Cref{sec:model} defines the oracle model and states the main theorem.
After the overview in \Cref{sec:overview}, \Cref{sec:basic} develops the
metric extension and its contact certificates,
\Cref{sec:convex-domain-geometry} proves their persistence, and
\Cref{sec:oracle} constructs the resisting oracle and proves the lower
bound. \Cref{sec:upper} supplies the matching upper bound.

\section{Related Work}\label{sec:related}

\paragraph{Riemannian first-order methods.}
Ferreira and Oliveira~\cite{FO98} introduced a Riemannian subgradient
algorithm and established convergence under nonnegative sectional
curvature. Zhang and Sra~\cite{ZS16} developed nonasymptotic first-order
analyses that expose curvature through comparison geometry. For
Lipschitz geodesically convex objectives, their subgradient estimate
scales as the square root of the ratio between the curvature factor and
the iteration count, giving the upper benchmark used here after converting
diameter to localization radius. Ferreira, Louzeiro, and
Prudente~\cite{FLP19} further analyzed exogenous and Polyak step sizes on
manifolds with lower-bounded curvature. These analyses quantify curvature
in subgradient progress estimates. Our lower bound establishes the same
product dependence for every deterministic algorithm, uniformly over dimensions.

\paragraph{Curvature lower bounds and convex interpolation.}
Hamilton and Moitra~\cite{HM21} established an obstruction to robust
acceleration in the hyperbolic plane with noisy first-order access.
Criscitiello and Boumal~\cite{CB22} proved an obstruction for smooth,
strongly geodesically convex optimization with exact oracles and
unrestricted queries. Their later nonsmooth work~\cite{CB23} is the
closest predecessor to the present theorem. Its A1 restriction constrains
the geodesic span of queries, and A2 requires queries to remain in the
historical supporting halfspaces. Removing both requires exact global
consistency outside those regions. A further obstruction is that
pairwise value--subgradient support inequalities need not admit a
geodesically convex interpolant, even for three points
\cite[Proposition~16]{CB23}. We construct the final function explicitly,
so consistency follows from a common carrier and common supporting
normals. The convexity principles behind metric infimal convolution
are classical: Azagra and Ferrera~\cite{AF06} study partial minimization
of jointly convex functions and distance-based regularization on
nonpositively curved manifolds. Our contribution is the choice and
evolution of the carrier that preserve all exact contacts.

\paragraph{Independent concurrent work.}
Criscitiello~\cite{C26} independently obtains the same dimension-uniform
deterministic complexity $\Theta(\zeta_R\eps^{-2})$ in a universal
small-accuracy regime. Both works resolve the unrestricted Lipschitz
case of the conjecture in Criscitiello and Boumal~\cite{CB23}, and both
use the nested hyperbolic geometry developed in their Appendix~A.2.
The hard functions and the mechanisms for handling arbitrary queries
differ. Criscitiello uses reflected distance-to-halfspace ramps with
an advance-or-stall argument, followed by a black-box reduction removing
the geodesic-span assumption. We instead extend a radial source from
a growing convex carrier and preserve all historical contacts through
supporting halfspaces, constructing a resisting oracle directly for
arbitrary queries. Thus the two independent solutions share the
foundational geometry of the 2023 work while supplying different
mechanisms for overcoming its algorithmic restrictions.

\paragraph{Dimension-dependent cutting planes.}
Rusciano~\cite{Rusciano20} established a Riemannian centerpoint theorem
and its consequences for cutting-plane optimization. Such oracle
guarantees raise a separate issue: computing the centerpoint may be
expensive. Criscitiello, Mart\'inez-Rubio, and Boumal~\cite{CMRB23} ask
for $O(\operatorname{poly}(\zeta_R,d)\log(1/\eps))$ queries with
$\operatorname{poly}(\zeta_R,d)$ arithmetic work per iteration on their
class of Riemannian manifolds. In hyperbolic space, Zhang, Yang, Zhu,
and Zhang~\cite[Theorem~1.1, version~4]{Klein26} obtain the explicit
bound
\[
 \left\lceil 2d(d+1)
 \log\!\left(\frac{16\sinh R\cosh R}{R\eps}\right)\right\rceil
 =O\!\left(d^2[R+\log(e/\eps)]\right).
\]
Their method represents supporting halfspaces in the Klein model. Its
explicit dependence on $d$ permits logarithmic accuracy dependence in
fixed dimension. Our lower bound concerns the supremum over dimensions
and imposes no arithmetic budget.

\paragraph{Horospherical structure and stronger oracles.}
Lewis, L\'opez-Acedo, and Nicolae~\cite{LLN24} study a subgradient-type
method using horospherically convex sublevel sets and supporting
horoballs; their framework also allows quasiconvex objectives.
Criscitiello and Kim~\cite{CK25} obtain curvature-independent rates for
h-convex objectives and averages of h-convex components supplied with
component h-subgradients. Goodwin, Lewis, L\'opez-Acedo, and
Nicolae~\cite{GLLN26} develop stochastic and incremental methods using
Busemann subgradients on Hadamard spaces. These formulations exploit
support geometry that is not available from an arbitrary ordinary
Riemannian subgradient of a general geodesically convex function.
Their curvature-independent guarantees thus do not contradict the
present lower bound. At higher derivative orders, Han~\cite{Han26}
compares horospherical and geodesic convexity for smooth strongly convex
objectives with exact $p$th-order access, $p\ge2$. These additional
derivatives and regularity assumptions go beyond the present oracle model.

\section{Oracle Model and Main Result}\label{sec:model}

We first work in $\HH^d$ with sectional curvature $-1$. The reference
point $o$, dimension $d\ge2$, localization radius $R>0$, Lipschitz constant
$L>0$, and accuracy $\eps>0$ are known to the algorithm. We write
$\dist$ for geodesic distance, $\exp_x$ and $\log_x$ for the exponential
and logarithm maps, and $\ip{\cdot}{\cdot}$ for the Riemannian inner product
at the indicated base point. Scalar logarithms are natural.

\subsection{Functions, oracle replies, and charged queries}

Let $\mathcal F_d(R,L)$ be the class of functions $f:\HH^d\to\RR$ such
that $f$ is geodesically convex on the whole space,
\[
 |f(x)-f(y)|\le L\dist(x,y)\quad\text{for all }x,y\in\HH^d,
 \qquad \operatorname*{argmin}f\cap\B(o,R)\ne\varnothing.
\]
Here geodesic convexity means that
$f(\gamma(t))\le(1-t)f(\gamma(0))+tf(\gamma(1))$ for every
constant-speed geodesic segment $\gamma:[0,1]\to\HH^d$.
For any $x$, define its global Riemannian subdifferential by
\[
 \partial f(x)=\left\{g\in T_x\HH^d:
 f(y)\ge f(x)+\ip{g}{\log_x y}\ \text{for all }y\in\HH^d\right\}.
\]
\Cref{lem:selection} verifies that this set is nonempty for every
function in the class. A legal exact first-order oracle is a fixed map
$O_f:x\mapsto(f(x),g_f(x))$ with $g_f(x)\in\partial f(x)$.
The algorithm must succeed for every legal selection $g_f$.

One call returns both the value and the full selected tangent vector at
one arbitrary point of $\HH^d$. Queries need not lie in the localization
ball, in a span determined by earlier replies, or in earlier supporting
halfspaces. The algorithm is deterministic and may perform arbitrary
computation between calls; exact geometric operations, including
exponential maps, logarithm maps, and projections, incur no objective-oracle
calls. Its output may be an unqueried point. The required guarantee is
\begin{equation}\label{eq:target}
 f(\widehat x)-\min f\le\eps LR.
\end{equation}

Let $\mathfrak O(f)$ denote all legal fixed oracle maps for $f$. For a
deterministic algorithm $A$, let $N_A(f,O_f)$ count its calls before
returning its output $\widehat x_A(f,O_f)$, with $N_A=+\infty$ if it never
returns. Define
\begin{equation}\label{eq:minimax-definition}
 Q_d^\star(R,\eps)=\inf\left\{N\in\mathbb N_0:
 \begin{array}{l}
 \exists A\ \forall f\in\mathcal F_d(R,L)\ \forall O_f\in\mathfrak O(f),\\
 N_A(f,O_f)\le N,\quad
 f(\widehat x_A(f,O_f))-\min f\le\eps LR
 \end{array}\right\}.
\end{equation}
Multiplying or dividing every returned value and vector by $L$ gives
exact simulations between this class and its $L=1$ normalization, so
the complexity does not depend on $L$. We suppress that parameter and
put $Q^\star(R,\eps)=\sup_{d\ge2}Q_d^\star(R,\eps)$.
The infimum of an empty feasible set is $+\infty$. Restricting attention
to fixed oracle maps does not weaken our lower construction: every
adversarial transcript below has such a completion. The projected
subgradient upper bound also allows any legal selected subgradient at
each call.

\Needspace{22\baselineskip}
\subsection{Matching complexity}

\begin{theorem}[Matching deterministic complexity]\label[theorem]{thm:main}
For every $R,L>0$ and $0<\eps\le1/64$,
\begin{equation}\label{eq:main-bounds}
 \boxed{\quad
 \frac{\zeta_R}{260\eps^2}
 \le Q^\star(R,\eps)
 \le \left\lceil\frac{3\zeta_R}{\eps^2}\right\rceil,
 \qquad \zeta_R=\frac{R}{\tanh R}.
 \quad}
\end{equation}
More precisely, put
\begin{equation}\label{eq:main-parameters}
 \mu=4\eps,\qquad
 \eta=\frac{2\mu}{1-\mu^2},\qquad
 \sigma=\sqrt{1-\eta^2},\qquad
 T=\left\lfloor
 \frac{\log(2\cosh(R/2))}{-\log\sigma}
 \right\rfloor.
\end{equation}
For every deterministic algorithm returning an output after at most $T$ queries in dimension
$d=T+2$, there is one $f\in\mathcal F_d(R,L)$, with a unique global
minimizer, and one legal fixed first-order oracle for $f$, such that its
output satisfies
\[
 f(\widehat x)-\min f\ge2\eps LR.
\]
The witnessing objective and oracle are fixed for the entire run of the
algorithm.
\end{theorem}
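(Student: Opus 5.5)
The proof splits into the upper bound, the lower construction, and the arithmetic turning the horizon $T$ into the boxed constant $\zeta_R/(260\eps^2)$. By the scaling remark after \Cref{eq:minimax-definition}, I normalize to $L=1$ and curvature $-1$ throughout.

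\emph{Upper bound.} I run projected Riemannian subgradient descent onto $\B(o,R)$ with constant step and return the best queried point. Projection onto a closed ball in a Hadamard space is nonexpansive and does not increase the distance to a minimizer $x^\star\in\B(o,R)$. The comparison inequality of Zhang--Sra on a space of curvature $\ge-1$, with diameter bound $2R$ from \Cref{lem:selection}-type subgradients, gives
\[
 \min_t f(x_t)-f^\star\le R\sqrt{\zeta_{2R}/N}\,\cdot\,O(1).
\]
I will bound $\zeta_{2R}\le 2\zeta_R$ and track the constant so that $N=\lceil3\zeta_R/\eps^2\rceil$ suffices. Returning the best iterate requires no extra calls, since values are returned exactly.

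\emph{Lower bound.} The plan follows the announced mechanism. I maintain a nested sequence of candidate spheres in the sense of Criscitiello--Boumal, with center $c_t$ and radius $S_t$ satisfying $S_0\approx R$. I also maintain a convex carrier $K_t$, namely the hyperbolic convex hull of all historical public balls. The tentative objective is a radial source extended from the carrier, roughly
\[
 f_t(x)=\dist(x,K_t)+\text{(linear-in-radius profile)},
\]
which is geodesically convex and $1$-Lipschitz because distance to a convex set in a Hadamard space is convex.

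When the algorithm queries $x_t$, the oracle answers with the value and a supporting normal of $f_t$. If $x_t$ is ``informative'', the oracle applies a small residual cut in a fresh coordinate direction; this uses one new dimension per query, which explains $d=T+2$. The cut shrinks the candidate set so that the recentering identity gives
\[
 \cosh S_{t+1}\ge\sigma\cosh S_t\cdot(\dots),
\]
or more usefully $\log\sinh S_{t+1}\ge\log\sinh S_t+\log\sigma$. Each query therefore spends $-\log\sigma=\Theta(\eps^2)$ of the potential $\log\sinh S$. The definition of $T$ in \Cref{eq:main-parameters} ensures that after $T$ queries the radius is still at least about $R/2$, since $\log(2\cosh(R/2))$ is exactly the budget.

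The key invariant is \emph{persistence}: every earlier reply $(f_s(x_s),g_s)$ remains an exact value-and-subgradient pair for all later $f_t$, and hence for the final $f$. I would prove this with the exact hyperbolic persistence inequality. Enlarging the carrier only inside the historical supporting halfspace, which holds because of the $\mu=4\eps$ margin and $\eta=2\mu/(1-\mu^2)$, leaves $\dist(x_s,K_t)$ and its gradient unchanged. After the last query, the final $f$ is completed by placing the unique minimizer at one of two antipodal candidate points on the remaining sphere. Whichever the output $\widehat x$ is far from gives gap $\ge2\eps R$, because the sphere radius is $\gtrsim R/2$ and the profile slope is $\Theta(\eps)$. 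This step also confirms $f\in\mathcal F_d$, since the minimizer lies in $\B(o,R)$.

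\emph{Arithmetic.} With $\eta\le 9\eps$ for $\eps\le1/64$, we get $-\log\sigma\le \eta^2\cdot 0.51\le 42\eps^2$. Combined with $\log(2\cosh(R/2))\ge c\,\zeta_R$ (checking the small-$R$ regime, where the budget is $\log 2$, and the large-$R$ regime separately), this gives $T+1\ge\zeta_R/(260\eps^2)$.

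\emph{Main obstacle.} The hardest step is persistence under arbitrary (non-span, out-of-halfspace) queries. One must show that a single fixed convex $f$ and a single fixed selection realize all exact replies simultaneously, and that the residual cut is small enough to cost only $O(\eps^2)$ of the potential. I would first prove a lemma that the hull of balls grows only within the intersection of historical supporting halfspaces, and then verify contact normals via the first variation of distance to a convex set.
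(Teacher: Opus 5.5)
Your upper bound and your arithmetic for $T$ are fine. Using the Zhang--Sra triangle comparison with the side $\dist(x_j,p)\le 2R$ even gives $\zeta_{2R}\le 2\zeta_R$, slightly better than the paper's $\zeta_{3R}$ from bounding the Hessian along the unprojected step.

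The gap is in the lower bound: the hard function and the persistence mechanism are left vague or misidentified. A function of the form $\dist(x,K_t)$ plus a radial profile does not work. If the profile is $\mu\dist(x,p)$, then $f_t$ is differentiable at a generic exterior query $y$. Its gradient contains $-\mu u_{y,p}$, which has a nonzero component along the still-hidden residual direction, so one reply leaks $p$. Also, your $K_t$ does not contain $p$, so neither the floor $f\ge\mu\dist(\cdot,p)$ nor the location of the minimizer is secured.

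The paper instead uses the infimal convolution $F_{K,p}(y)=\min_{x\in K}\{\mu\dist(x,p)+\dist(y,x)\}$ with $K=\overline{\gconv}(P_t\cup\{p\})$. \Cref{lem:contact-realization} shows the candidate-independent reply $(\mu D+\ell,\,-\log_y z/\ell)$ at $y=\exp_z(\ell q)$ is exact whenever the \emph{tilted} vector $n=q+\mu u_{z,p}$ lies in $N_K(z)$. So the invariant is not that the carrier keeps the nearest point to $x_s$. It is that $K$ stays in $\{x:\ipL{n_{s,p}}{x}\le0\}$, a halfspace that depends on the hidden $p$. Making $D$ and $\vartheta=-\ip{q}{u_{z,p}}$ common to all survivors takes three specific choices:
\begin{itemize}
\item choosing $z$ by minimizing $\mu\arcosh(\cosh S\cosh\dist(c,x))+\dist(y,x)$ over the public domain, using the common projection of the candidate sphere;
\item the explicit on-domain choice $q=-\mu u_V+\beta e$;
\item the latitude cut $\ip{e}{v}=-\eta$.
\end{itemize}

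Your sentence ``which holds because of the $\mu=4\eps$ margin and $\eta=2\mu/(1-\mu^2)$'' is exactly the theorem's core, and it is not proved. Your suggested lemma, that the hull grows inside the historical halfspaces, is the statement to be shown rather than a route to it. Later recentering moves the public ball and increases $a'=\sinh D/\sinh S'$, so checking validity at the current stage is not enough. The paper needs four steps:
\begin{itemize}
\item the normal reserve $-a\ip{n_V}{u_V}\ge\eta\norm{n_V}$, supplied by the current public ball of radius $\sinh r_S=\eta\tanh S$ (this is why that ball must lie in the carrier);
\item the resulting bound $F_\vartheta(a)\ge\eta$ after the cut;
\item the perfect-square identity at $\vartheta=\tau$ together with the one-turning-point shape of $F_\vartheta$ in $a^2$, giving $F_\vartheta(a')\ge\eta$ for all $a'\ge a$, with the case $a<a_0$ handled separately via $n_V=0$ and $\vartheta\ge\tau$;
\item the Lorentz projection computation identifying $F_\vartheta(a')\tanh S'$ with the ball-containment ratio of \Cref{lem:cd-ball-criterion} for every future ball.
\end{itemize}
Only then can you keep the hull of all (non-nested) public balls and apply \Cref{lem:contact-realization} to one $K$ for every archived contact.

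Finally, the statement asks for one fixed oracle. You must fix a selection at unqueried points, for example the minimum-norm subgradient from \Cref{lem:selection}. You must also replay the deterministic algorithm against the fixed instance to show it reproduces the simulated transcript.
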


The construction proving the lower bound appears in \Cref{sec:oracle},
using the contact-persistence results of
\Cref{sec:convex-domain-geometry}. The upper bound is
\Cref{prop:upper}. Together they show that the known curvature penalty
is unavoidable throughout the stated accuracy range.

\begin{corollary}[Strict localization]\label[corollary]{cor:open-ball}
If the minimizer is required to lie in the open ball $B(o,R)$, the same
dimension-uniform rate holds. In particular, for $0<\eps\le1/64$ its
complexity is at least $3\zeta_R/(1040\eps^2)$ and at most
$\lceil3\zeta_R/\eps^2\rceil$.
\end{corollary}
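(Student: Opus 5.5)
The corollary should follow from \Cref{thm:main} by a rescaling of the localization radius, with no new construction. The upper bound is immediate: the open-ball class is a subclass of the closed-ball class $\mathcal F_d(R,L)$, and the accuracy target $\eps LR$ is unchanged. Hence the projected subgradient guarantee of \Cref{prop:upper} applies verbatim and gives at most $\lceil3\zeta_R/\eps^2\rceil$ queries.

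For the lower bound I will embed a closed-ball hard instance of slightly smaller radius. I will set $R'=3R/4$, which is strictly less than $R$. I then apply the precise form of \Cref{thm:main} at radius $R'$ and accuracy $\eps'=\eps/4$.

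The theorem then supplies, for every deterministic algorithm making at most $T(R',\eps')$ queries in dimension $T+2$, a function $f$ and a fixed legal oracle with the following properties. The function has a unique minimizer in $\B(o,R')\subset B(o,R)$, so it is admissible for the open-ball problem. The algorithm's output satisfies
\[
f(\widehat x)-\min f\ge 2\eps' LR'=\tfrac{3}{8}\eps LR.
\]
This bound is not yet $\ge\eps LR$. So I will instead choose $\eps'$ with $2\eps'R'>\eps R$, namely $\eps'=\tfrac{2}{3}\eps\cdot(1+\delta)$. Actually the cleaner choice is $\eps'=\eps$ with $R'=3R/4$. This gives a failure gap of $2\eps LR'=\tfrac32\eps LR>\eps LR$, so every algorithm with at most $T(R',\eps)$ queries fails. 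Moreover $\eps'=\eps\le1/64$ stays in range, so the theorem applies.

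It then remains to lower-bound the number of queries. I will use the quantitative bound already present in \Cref{thm:main}, namely $Q^\star(R',\eps)\ge\zeta_{R'}/(260\eps^2)$. The final step is to compare $\zeta_{R'}$ with $\zeta_R$. The function $x\mapsto x/\tanh x$ is increasing. Also $\zeta_{R'}/\zeta_R=\tfrac34\tanh R/\tanh(3R/4)$, and $\tanh R\ge\tanh(3R/4)$. Therefore $\zeta_{R'}\ge\tfrac34\zeta_R$.

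Combining these gives a lower bound of $3\zeta_R/(4\cdot260\eps^2)=3\zeta_R/(1040\eps^2)$, exactly the stated constant. This suggests $R'=3R/4$ with unchanged $\eps$ is the intended route.

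The only point needing care is that the theorem's lower bound constant applies at radius $R'$ but with the success criterion still stated at radius $R$. Here the factor $2$ in the theorem's failure gap absorbs the loss: since $2R'\ge R$, a failure gap of $2\eps LR'$ exceeds the target $\eps LR$. The fixed oracle is also legal for the open-ball class, because that class's oracle requirements are identical.
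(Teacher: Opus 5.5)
Your final route is exactly the paper's proof: apply the precise form of \Cref{thm:main} at $R'=3R/4$ with the same $\eps$. Then the failure gap $2\eps LR'=\tfrac32\eps LR$ exceeds the target, the hard minimizer lies in $B(o,R)$, $\zeta_{R'}\ge\tfrac34\zeta_R$, and the upper bound is inherited from the larger closed-ball class.

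One small precision: the count you need is $T(R',\eps)+1\ge\zeta_{R'}/(260\eps^2)$, which the theorem's explicit-constant computation establishes. It is not the bound on $Q^\star(R',\eps)$ itself, since that quantity uses the success target $\eps LR'$ rather than the open-ball target $\eps LR$.
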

\begin{proof}
Apply the more precise lower-bound assertion in \cref{thm:main} at
$R_0=3R/4$, with the same $\eps$ and $L$. Its output gap is at least
$2\eps LR_0=\tfrac32\eps LR>\eps LR$, while its minimizer has distance
$R_0<R$ from $o$. Also $\zeta_{R_0}\ge\tfrac34\zeta_R$.
The closed-ball upper bound applies to this smaller function class.
\end{proof}

\subsection{Metric scaling and parameter interpretation}

For a space of curvature $-k^2<0$ with metric tensor $\mathsf g$, replace
the metric by $\mathsf g'=k^2\mathsf g$. The new curvature is $-1$ and
distances multiply by $k$. A physical radius $r$ and Lipschitz constant
$L$ become $R=kr$ and $L'=L/k$, so
$\eps L'R=\eps Lr$. A selected subgradient vector $h$ becomes
$h'=h/k^2$. The connection and the tangent-vector exponential and
logarithm maps are unchanged; hence
$\ip{h'}{\log_x y}_{\mathsf g'}=\ip{h}{\log_x y}_{\mathsf g}$,
and the complete oracle support inequality is preserved. This gives
the dependence on $\rho=kr$ stated in the abstract, with
$\zeta=\rho/\tanh\rho$.

All asymptotic constants in the main theorem are universal over its
stated parameters. In particular, $\zeta_R\to1$ as $R\to0$ and
$\zeta_R\sim R$ as $R\to\infty$. The lower bound therefore captures
the transition from nearly Euclidean local geometry to large
hyperbolic localization radii without fixing the curvature factor.

\section{Proof Overview}\label{sec:overview}

The proof of \Cref{thm:main} must control both the information in a new
query and the global compatibility of all previous replies. A selected
subgradient is a complete exact vector, and a small component in a hidden
direction cannot be treated as unobservable. Moreover, checking support
inequalities only at queried points does not construct a global convex
objective. We preserve the transcript through supporting halfspaces of
one explicitly defined carrier.

\paragraph{A convex carrier turns contacts into full oracle replies.}
For a candidate minimizer $p$ and a compact geodesically convex set $C$
containing it, consider the normalized objective
\[
 F_{C,p}(y)=\min_{x\in C}\{\mu\dist(x,p)+\dist(y,x)\},
 \qquad \mu=4\eps.
\]
\Cref{lem:metric-carrier} proves that this metric extension is globally
convex, globally $1$-Lipschitz, and bounded below by
$\mu\dist(y,p)$. Thus separation from $p$ implies objective error.
\Cref{lem:contact-realization} shows that a supporting normal at a contact
$z\in C$ fixes both the exact value and a full global subgradient at the
query. Preserving that normal under enlargement of $C$ preserves the reply.
The boundary of $C$ matters: even after a query direction becomes public,
an old exterior query remains separated from the carrier by its original
supporting halfspace.

\paragraph{One residual cut pays for an arbitrary query.}
The public totally geodesic subspace $V$ and the orthogonal sphere of
candidate minimizers follow the construction of Criscitiello and
Boumal~\cite[Appendix~A.2]{CB23}. We augment this geometry with a compact
convex public domain $P\subset V$. The common-projection property of
the candidate sphere~\cite[Lemma~26]{CB23} makes all candidates'
distances to any point of $P$ coincide. We can
therefore choose a candidate-independent contact by optimizing over $P$.
When the contact direction has a nonzero residual component in direction
$e$, we retain a latitude
$\ip{e}{v}=-\eta$ of the residual unit sphere, where
$\eta=2\mu/(1-\mu^2)$. For an on-domain query, this residual direction
is chosen explicitly. This enlarges the public subspace by at most one dimension.
The contact angle is allowed to vary with the query, which makes the
same small latitude feasible for all contacts.

The public domain contains an intrinsic ball whose radius $r_S$ satisfies
$\sinh r_S=\eta\tanh S$, where $S$ is the candidate-sphere radius.
This ball supplies a quantitative reserve in every public supporting
normal. \Cref{prop:cd-small-latitude} combines that reserve with a scalar
identity and a branch argument to prove that the chosen contact will
remain valid at every smaller residual radius. The Lorentz calculation
in \Cref{prop:cd-future-balls} converts this scalar statement into
containment of every future public ball.

\paragraph{The entire transcript has one global completion.}
At each update, $P$ becomes the convex hull of its previous value and
the new public ball. These balls are generally not nested, so retaining
the entire hull is necessary for the archive invariant. Contact
optimality supports the old hull, while future-ball persistence supports
every later insertion. For each surviving $p$, the carrier
$C=\overline{\gconv}(P\cup\{p\})$ then satisfies all historical normal
inequalities simultaneously. \Cref{prop:prefix-consistency} proves that
the associated $F_{C,p}$ reproduces every value and every full selected
subgradient after every finite prefix.

\paragraph{The geometric budget.}
The recentering and radius budget are those of Criscitiello and
Boumal~\cite[Eq.~(7), Lemma~24]{CB23}. In our notation, recentering
the surviving latitude gives the exact update
\[
 \sinh S^+=\sigma\sinh S,
 \qquad \sigma=\sqrt{1-\eta^2}.
\]
For $\mu=4\eps$, its potential cost is
$-\log\sigma=\Theta(\eps^2)$. The available decrease before $S$ drops
from $R$ to $R/2$ is
\[
 \log\frac{\sinh R}{\sinh(R/2)}
 =\log\bigl(2\cosh(R/2)\bigr)=\Theta(\zeta_R).
\]
Thus a horizon of order $\zeta_R\eps^{-2}$ leaves a candidate sphere of
radius at least $R/2$. A surviving candidate can be chosen at distance
at least $R/2$ from any final output, and the metric-extension floor
gives an objective gap of at least $2\eps R$. Finally, deterministic
replay turns the adaptive construction into a run against one fixed
function and one fixed oracle. \Cref{sec:oracle} makes the dimension,
termination alternatives, and uniform constants explicit.

\section{Convex Carriers and Exact Oracle Replies}\label{sec:basic}

The geometric preliminaries below make the global extension and its
selected subgradients explicit. We first record the distance facts needed
for convexity and global support, then identify the supporting-contact
condition that survives the later construction.

We use the hyperboloid
\[
 \HH^d=\{x\in\RR^{d+1}:\ipL{x}{x}=-1,\ x_0>0\},\qquad
 \ipL{x}{y}=-x_0y_0+\sum_{j=1}^d x_jy_j.
\]
If $v\in T_x\HH^d$ is unit, then
\[
 \exp_x(tv)=\cosh t\,x+\sinh t\,v,
 \qquad \cosh\dist(x,y)=-\ipL{x}{y}.
\]
For $D=\dist(x,p)>0$, the unit direction to $p$ and the radial gradient are
\begin{equation}\label{eq:radial-unit}
 u_{x,p}=\frac{p-\cosh D\,x}{\sinh D},
 \qquad \nabla_x\dist(x,p)=-u_{x,p}.
\end{equation}
A public totally geodesic subspace has the form $V=\HH^d\cap W$, where
$W$ is a Lorentz subspace containing a timelike vector. Its orthogonal
complement $W^{\perp_{\mathrm L}}$ is Euclidean. We allow $V=\{o\}$.
Closed geodesic convex hulls of bounded subsets are compact in this
finite-dimensional space. For example, the Klein model maps them to closed
Euclidean convex hulls inside a compact subset of the unit ball.

For a closed convex $P\subset V$ and $z\in P$, write
\[
 N_P^V(z)=\{n\in T_zV:\ip{n}{\log_z x}\le0\text{ for all }x\in P\}.
\]
For $n\in T_z\HH^d$, the sign of $\ip{n}{\log_z x}$ equals the sign
of $\ipL{n}{x}$, since
\begin{equation}\label{eq:sign}
 \ip{n}{\log_z x}
 =\frac{\dist(z,x)}{\sinh\dist(z,x)}\ipL{n}{x}
 \quad(x\ne z).
\end{equation}
Thus a normal defines a geodesically convex halfspace through $z$.

\begin{lemma}[Global distance facts]\label[lemma]{lem:distance-facts}
Distance on $\HH^d$ is jointly convex along pairs of constant-speed
geodesics. For every $z$, the map $\log_z:\HH^d\to T_z\HH^d$ is
$1$-Lipschitz.
\end{lemma}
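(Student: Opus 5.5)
Both assertions follow from classical CAT(0)/Hadamard comparison geometry, specialized to the hyperboloid model so that each step can be checked directly.

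\emph{Joint convexity of distance.} Let $\gamma_1,\gamma_2:[0,1]\to\HH^d$ be constant-speed geodesics. We show that $t\mapsto\dist(\gamma_1(t),\gamma_2(t))$ is convex in two steps.

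First, for a fixed point $q$, the function $t\mapsto\dist(\gamma(t),q)$ is convex. This is the one-geodesic case. For $D=\dist(\cdot,q)$, the identity $\cosh D=-\ipL{\gamma(t)}{q}$ together with $\ddot\gamma=\norm{\dot\gamma}^2\gamma$ on the hyperboloid gives $(\cosh D)''=\norm{\dot\gamma}^2\cosh D$. Writing $D''$ in terms of this, one finds $D''\ge0$ away from $q$; the standard Hessian formula $\nabla^2 D=\coth D\,(g-dD\otimes dD)\succeq0$ says the same thing. Where the geodesic meets $q$, $D$ is a norm along the line locally, hence still convex.

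Second, pass to two geodesics via the midpoint trick. Set $m=\gamma_1(1/2)$ and let $\beta$ be the geodesic from $\gamma_1(0)$ to $\gamma_2(1)$. In CAT(0) space the comparison triangle inequality gives
\[
\dist(\gamma_1(\tfrac12),\beta(\tfrac12))\le\tfrac12\dist(\gamma_1(1),\gamma_2(1)),
\qquad
\dist(\beta(\tfrac12),\gamma_2(\tfrac12))\le\tfrac12\dist(\gamma_1(0),\gamma_2(0)).
\]
Each bound is a midpoint estimate for two geodesics with a common endpoint. It follows from Toponogov/CAT(0) comparison with the Euclidean triangle, and can be verified directly on the hyperboloid by the law of cosines with $\cosh$. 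The triangle inequality then gives midpoint convexity of $t\mapsto\dist(\gamma_1(t),\gamma_2(t))$. Continuity upgrades midpoint convexity to full convexity, and reparametrizing subsegments gives the inequality at every $t$.

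\emph{$\log_z$ is $1$-Lipschitz.} Fix $x,y$. The exponential map $\exp_z$ does not decrease distances, because comparison with Euclidean space (Rauch, curvature $\le0$) gives $\dist(\exp_z a,\exp_z b)\ge\norm{a-b}$. Applying this with $a=\log_z x$ and $b=\log_z y$ gives
\[
\norm{\log_z x-\log_z y}\le\dist(x,y).
\]
To stay self-contained in the model, this is the hyperbolic law of cosines. For the triangle $z,x,y$ with side lengths $a,b$ at $z$ and included angle $\theta$,
\[
\cosh c=\cosh a\cosh b-\sinh a\sinh b\cos\theta.
\]
We want $\cosh c\ge\cosh\sqrt{a^2+b^2-2ab\cos\theta}$. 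Both sides are affine-type in $\cos\theta$, so it suffices to compare the endpoints $\cos\theta=\pm1$, where the two sides are equal. The right side is convex in $\cos\theta$: indeed, $w\mapsto\cosh\sqrt{w}$ has a series with nonnegative coefficients, so it is convex, and $w=a^2+b^2-2ab\cos\theta$ is affine in $\cos\theta$. The left side is affine in $\cos\theta$, so the inequality holds on the whole interval between the equal endpoints.

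\emph{Main obstacle.} The step needing care is the two-geodesic joint convexity. I would cite the standard CAT(0) statement (Bridson--Haefliger, Prop.~II.2.2) and record only the hyperboloid law-of-cosines check above as the supporting comparison.
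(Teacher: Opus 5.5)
Your proof is correct, and for joint convexity it takes a genuinely different route from the paper.

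\textbf{Joint convexity.} The paper argues by calculus. It applies the second variation formula to the affinely parametrized segments joining $\gamma_1(t)$ to $\gamma_2(t)$. The endpoint terms vanish because both endpoints move along geodesics, and curvature $-1$ makes the integrand nonnegative. A separate one-sided-slope argument is then needed at times where the two curves meet and the distance is not smooth.

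You instead use the synthetic CAT(0) argument: two common-vertex midpoint estimates through the auxiliary geodesic $\beta$, the triangle inequality, and continuity. This needs no smoothness, so coincidences are handled automatically. In $\HH^d$ your route can also be made fully explicit with the same chord trick you use later. With sides $a,b$ and angle $\theta$ at the shared vertex,
$\cosh\bigl(2\dist(m_1,m_2)\bigr)=2\bigl(\cosh\tfrac a2\cosh\tfrac b2-\sinh\tfrac a2\sinh\tfrac b2\cos\theta\bigr)^2-1$
is a convex quadratic in $\cos\theta$. It agrees with the affine function $\cosh a\cosh b-\sinh a\sinh b\cos\theta$ at $\cos\theta=\pm1$, hence lies below it, which gives $2\dist(m_1,m_2)\le\dist(x,y)$. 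The paper's calculus route, by contrast, transfers verbatim to any Hadamard manifold once second variation is available.

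\textbf{Lipschitz property of $\log_z$.} Your Rauch statement that $\exp_z$ does not decrease distances is the paper's argument in other words. The paper compares $dr^2+\sinh^2r\,d\theta^2$ with $dr^2+r^2d\theta^2$ and integrates. Your law-of-cosines verification is an independent, fully elementary alternative, and it is correct.

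\textbf{Minor points.}
\begin{itemize}
\item Your first step, convexity of $\dist(\gamma(t),q)$, is never used by the midpoint argument and can be dropped.
\item Toponogov's theorem concerns lower curvature bounds. The comparison you actually need is the Alexandrov/CAT(0) (or Rauch) one.
\item Only the left side of your law-of-cosines inequality is affine in $\cos\theta$. The right side is convex, which is exactly what your chord argument uses, so ``both sides are affine-type'' should be rephrased.
\end{itemize}
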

\begin{proof}
For the first assertion, let the two endpoints move along geodesics.
At a time when their distance $\ell$ is positive, parametrize the
reference joining segment by arc length and use the variation field
of the affinely parametrized joining segments. The second
variation of length gives
\[
 \ell''=\int_0^\ell
 \left(\norm{D_sJ^\perp}^2
       -\ip{\mathrm{Rm}(J^\perp,T)T}{J^\perp}\right)\,ds\ge0.
\]
Here $T$ is the unit tangent and $J^\perp$ is the perpendicular component
of the variational field. Endpoint acceleration terms vanish because the
endpoint curves are geodesics; sectional curvature $-1$ makes the
curvature contribution nonnegative. At an endpoint coincidence the
distance is continuous and nonnegative. The one-sided slopes of its
adjacent convex restrictions therefore satisfy
$\ell'_-\le0\le\ell'_+$. These slopes are finite because the sum of the
endpoint speeds bounds the Lipschitz constant of their distance. This
extends convexity through such points; identical endpoint curves give
identically zero distance.

For the second assertion, compare the hyperbolic polar metric
$dr^2+\sinh^2r\,d\theta^2$ with the Euclidean tangent-space metric
$dr^2+r^2d\theta^2$. The differential of $\log_z$ cannot increase length.
Integrating this inequality along a geodesic between any two points gives
the claimed global Lipschitz inequality.
\end{proof}

\begin{lemma}[Existence of a fixed subgradient selection]\label[lemma]{lem:selection}
If $f:\HH^d\to\RR$ is globally geodesically convex and globally
$L$-Lipschitz, then $\partial f(x)$ is a nonempty compact convex subset
of the closed radius-$L$ ball in $T_x\HH^d$ for every $x$. It therefore
has a unique minimum-norm element.
\end{lemma}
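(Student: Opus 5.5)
The plan is to separate the easy structural claims from the one substantive claim, nonemptiness, and to obtain nonemptiness from a limit of Euclidean-type subgradients taken in the tangent space at $x$.

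\textbf{Easy claims.} Convexity and closedness of $\partial f(x)$ are immediate. It is the intersection over $y\in\HH^d$ of closed affine halfspaces $\{g:\ip{g}{\log_x y}\le f(y)-f(x)\}$ in $T_x\HH^d$.

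For the norm bound, take $g\in\partial f(x)$ with $g\ne0$. Test the inequality at $y=\exp_x(-tg/\norm{g})$ for small $t>0$. Then $\log_x y=-tg/\norm{g}$, so $f(y)\ge f(x)-t\norm{g}$. Lipschitz continuity gives $f(y)\le f(x)+Lt$ only in absolute value, so instead use $y=\exp_x(tg/\norm{g})$. Then $f(y)-f(x)\ge t\norm{g}$, while $f(y)-f(x)\le L\dist(x,y)=Lt$, hence $\norm{g}\le L$. Boundedness plus closedness gives compactness. Once nonemptiness is known, the unique minimum-norm element follows from strict convexity of the squared norm on a nonempty compact convex set.

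\textbf{Nonemptiness: pull back and take a one-sided directional derivative.} Let $\phi(v)=f(\exp_x v)$ on $T_x\HH^d$. This $\phi$ is not convex in general, but it is convex along rays through $0$, because those map to geodesics through $x$. More usefully, consider $\phi'(0;v)=\lim_{t\downarrow0}(\phi(tv)-\phi(0))/t$, which exists by convexity of $f$ along $t\mapsto\exp_x(tv)$. This directional derivative is positively homogeneous and $L$-Lipschitz in $v$, since $\dist(\exp_x tv,\exp_x tw)\le t\norm{v-w}(1+o(1))$.

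The key claim is that $v\mapsto\phi'(0;v)$ is sublinear, i.e.\ convex. I would prove it by taking midpoints. Let $m_t$ be the geodesic midpoint of $\exp_x(tv)$ and $\exp_x(tw)$. Geodesic convexity gives $f(m_t)\le\tfrac12(f(\exp_x tv)+f(\exp_x tw))$. Also $\dist\bigl(m_t,\exp_x(t(v+w)/2)\bigr)=o(t)$, since the exponential map is a second-order isometry at $0$. Combined with the Lipschitz bound, this yields $\phi'(0;(v+w)/2)\le\tfrac12(\phi'(0;v)+\phi'(0;w))$.

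Hahn--Banach in the finite-dimensional space $T_x\HH^d$ then gives a linear functional $\ip{g}{\cdot}\le\phi'(0;\cdot)$.

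\textbf{Globalizing the subgradient.} For any $y$, let $v=\log_x y$. The function $t\mapsto f(\exp_x(tv))$ on $[0,1]$ is convex, so $f(y)-f(x)\ge\phi'(0;v)\ge\ip{g}{\log_x y}$. Hence $g\in\partial f(x)$.

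I expect the main obstacle to be the midpoint estimate $o(t)$. I would handle it concretely in the hyperboloid model: $m_t$ is the normalization of $\exp_x(tv)+\exp_x(tw)$, and expanding to second order shows it equals $x+t(v+w)/2+O(t^2)$, which gives the required estimate.
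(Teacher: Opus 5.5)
Your proposal is correct. The structural part matches the paper's argument: closedness and convexity as an intersection of halfspaces, $\norm g\le L$ via the test point $\exp_x(tg/\norm g)$, compactness, and uniqueness of the minimum-norm element. You should simply delete the false start with $\exp_x(-tg/\norm g)$.

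For nonemptiness you take a genuinely different route. The paper uses Rademacher's theorem. At a differentiability point the gradient is already a global subgradient, by the one-dimensional slope inequality along geodesics. A subgradient at $x$ is then obtained as a limit of parallel-transported gradients at nearby differentiability points, with the global support inequalities passed to the limit by continuity of $\log$, parallel transport, and $f$.

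You instead work directly at $x$. The derivative $\phi'(0;\cdot)$ is positively homogeneous, and midpoint convexity, the $O(t^2)$ midpoint estimate, and the Lipschitz bound make it subadditive. Finite-dimensional Hahn--Banach then supplies a linear minorant $\ip{g}{\cdot}$, and the same slope inequality the paper uses at differentiability points globalizes it. Your hyperboloid midpoint computation is sound. To convert the ambient $O(t^2)$ error into an intrinsic one, use $2\sinh(\dist(p,q)/2)=\sqrt{\ipL{p-q}{p-q}}\le\norm{p-q}_{\RR^{d+1}}$.

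As for what each route buys: the paper's proof is shorter, but it leans on Rademacher's theorem and continuity of parallel transport, and it exhibits subgradients as limits of nearby gradients. Yours is elementary and purely local, needing only a second-order expansion of $\exp_x$ and a linear minorant of a finite sublinear function on $T_x\HH^d$. It also yields more at no extra cost. Applying Hahn--Banach from $\spanop\{v\}$ gives the max formula $\phi'(0;v)=\max_{g\in\partial f(x)}\ip{g}{v}$; the reverse inequality follows by testing any $g\in\partial f(x)$ at $\exp_x(tv)$.
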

\begin{proof}
At a differentiability point, the one-dimensional convex slope inequality
along each geodesic gives
$f(y)\ge f(x)+\ip{\nabla f(x)}{\log_x y}$ for every $y$.
Lipschitz continuity implies $\norm{\nabla f(x)}\le L$.
Rademacher's theorem in local coordinate charts supplies differentiability
points $x_j\to x$. Parallel transport their gradients to $T_x\HH^d$;
a subsequence converges to some $v$ with $\norm v\le L$. Continuity of
the logarithm map, parallel transport, and $f$ lets us pass each global
support inequality to the limit, giving
$f(y)\ge f(x)+\ip{v}{\log_x y}$ for every $y$.
Thus $\partial f(x)$ is nonempty. Its defining inequalities make it an
intersection of closed affine halfspaces in $T_x\HH^d$, hence closed and
convex. For any $g\in\partial f(x)$, apply its support inequality at
$\exp_x(tg/\norm g)$ and compare with Lipschitz continuity to obtain
$\norm g\le L$ (the case $g=0$ is immediate).
Finite dimensionality gives compactness, and strict convexity of the
squared norm gives the unique minimum-norm element.
\end{proof}

We now extend a radial source from a compact convex carrier to the whole
ambient space. The joint-convexity principle underlying this construction
also appears in the infimal-convolution framework of Azagra and
Ferrera~\cite[Lemma~3.1 and Corollary~4.2]{AF06}. The following contact
lemma shows how to preserve complete oracle replies as the carrier grows.

\begin{lemma}[A bounded convex carrier]\label[lemma]{lem:metric-carrier}
Let $0<\mu<1$, let $K\subset\HH^d$ be nonempty, compact, and geodesically
convex, and let $p\in K$. Define
\begin{equation}\label{eq:final-function}
 F_{K,p}(y)=\min_{x\in K}
 \{\mu\dist(x,p)+\dist(y,x)\}.
\end{equation}
Then $F_{K,p}$ is globally geodesically convex, globally $1$-Lipschitz,
and satisfies
\begin{equation}\label{eq:floor}
 F_{K,p}(y)\ge\mu\dist(y,p),\qquad F_{K,p}(p)=0.
\end{equation}
Consequently $p$ is its unique global minimizer. On $K$,
$F_{K,p}(x)=\mu\dist(x,p)$.
\end{lemma}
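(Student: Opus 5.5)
I will treat $F_{K,p}$ as the partial minimization, over $x\in K$, of the jointly defined function
\[
 \Phi(x,y)=\mu\dist(x,p)+\dist(y,x),\qquad (x,y)\in K\times\HH^d .
\]
The minimum is attained because $K$ is compact and $\Phi$ is continuous. The plan is to show that $\Phi$ is jointly geodesically convex on $K\times\HH^d$ and then invoke the standard fact that partial minimization of a jointly convex function over a convex set preserves convexity.

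\textbf{Joint convexity of $\Phi$.} By \Cref{lem:distance-facts}, the map $(x,y)\mapsto\dist(y,x)$ is jointly convex along pairs of constant-speed geodesics. Fixing one endpoint at $p$ (a constant geodesic) shows that $x\mapsto\dist(x,p)$ is convex.

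\textbf{Convexity of $F_{K,p}$.} Take $y_0,y_1\in\HH^d$ with a geodesic $\gamma$ joining them, and let $x_0,x_1\in K$ be minimizers for $y_0,y_1$. Let $\beta$ be the geodesic from $x_0$ to $x_1$; it stays in $K$ because $K$ is geodesically convex. Then
\[
 F_{K,p}(\gamma(t))\le\Phi(\beta(t),\gamma(t))\le(1-t)F_{K,p}(y_0)+tF_{K,p}(y_1),
\]
which is global geodesic convexity.

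\textbf{Lipschitz bound.} For any $y,y'$, let $x$ be the minimizer for $y'$. The triangle inequality gives $F_{K,p}(y)\le\Phi(x,y)\le\Phi(x,y')+\dist(y,y')=F_{K,p}(y')+\dist(y,y')$. Swapping the roles of $y$ and $y'$ gives the $1$-Lipschitz bound.

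\textbf{Floor and value at $p$.} For the floor \eqref{eq:floor}, use the triangle inequality and $\mu<1$:
\[
 \mu\dist(x,p)+\dist(y,x)\ge\mu\bigl(\dist(x,p)+\dist(y,x)\bigr)\ge\mu\dist(y,p).
\]
Choosing $x=p\in K$ gives $F_{K,p}(p)=0$. Uniqueness of the minimizer follows from the floor, since $\mu\dist(y,p)>0$ whenever $y\ne p$.

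\textbf{Values on $K$.} For $y\in K$, the choice $x=y$ shows $F_{K,p}(y)\le\mu\dist(y,p)$. Combined with the floor, this gives equality.

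\textbf{Main obstacle.} The only subtle point is the joint convexity of distance, but that is supplied by \Cref{lem:distance-facts}. What remains is to confirm that the minimizers $x_0,x_1$ exist and that the segment between them stays in $K$. Both follow directly from compactness and geodesic convexity of $K$, using that $\HH^d$ is uniquely geodesic.
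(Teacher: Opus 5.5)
Your proposal is correct and follows the paper's argument essentially step for step: attainment by compactness, convexity by interpolating the two evaluation points and their minimizing contacts along geodesics and applying joint convexity of distance, the Lipschitz bound as an infimum of $1$-Lipschitz functions, and the floor from the triangle inequality with $\mu<1$. The only cosmetic difference is that the paper's floor inequality retains the extra nonnegative term $(1-\mu)\dist(y,x)$, which you drop, and the agreement on $K$ and the uniqueness of the minimizer then follow exactly as in the paper.
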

\begin{proof}
Compactness gives attainment. Given two evaluation points and their
minimizing contacts, interpolate both pairs along geodesics. The interpolated
contact remains in $K$, and joint convexity of distance proves convexity of
$F_{K,p}$. The infimum of these $1$-Lipschitz functions is $1$-Lipschitz.
For every $x\in K$,
\[
 \mu\dist(x,p)+\dist(y,x)
 \ge\mu\dist(y,p)+(1-\mu)\dist(y,x).
\]
Taking the minimum proves the floor; taking $x=y\in K$ proves agreement
with the source. The remaining claims follow immediately.
\end{proof}

\begin{lemma}[Exact preservation of a contact]\label[lemma]{lem:contact-realization}
In \cref{lem:metric-carrier}, let $z\in K$, let $D=\dist(z,p)>0$, and
let $q\in T_z\HH^d$ be unit. Suppose
\begin{equation}\label{eq:normal-contact}
 n=q+\mu u_{z,p}\in N_K(z).
\end{equation}
For any $\ell>0$, set $y=\exp_z(\ell q)$. Then
\begin{equation}\label{eq:external-answer}
 F_{K,p}(y)=\mu D+\ell,
 \qquad
 -\frac{\log_y z}{\ell}\in\partial F_{K,p}(y).
\end{equation}
At $y=z$ one instead has
\begin{equation}\label{eq:internal-answer}
 F_{K,p}(z)=\mu D,\qquad q\in\partial F_{K,p}(z).
\end{equation}
For $\ell>0$, the query $y$ is outside $K$.
\end{lemma}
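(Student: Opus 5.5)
The plan is to prove the two inequalities $F_{K,p}(y)\le\mu D+\ell$ and $F_{K,p}(w)\ge\mu D+\ell+\ip{g}{\log_y w}$ for all $w$, where $g=-\log_y z/\ell$. The second, evaluated at $w=y$, gives $F_{K,p}(y)\ge\mu D+\ell$, so together they yield the value and the subgradient claim at once. The upper bound is immediate: take $x=z$ in \cref{eq:final-function}, which gives $\mu\dist(z,p)+\dist(y,z)=\mu D+\ell$. Note that $g$ is the unit vector at $y$ pointing away from $z$ along the geodesic, that is, $g$ is the velocity of $t\mapsto\exp_z(tq)$ at $t=\ell$. For the internal case $y=z$ we need $F(w)\ge\mu D+\ip{q}{\log_z w}$. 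Since $F(z)\le\mu D$ (again take $x=z$) and $F(z)\ge\mu D$ will follow from the same argument, we treat both cases uniformly with the affine minorant built from $q$.

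The key step is a lower bound on each term of the infimum. Fix $w$ and $x\in K$. We first use convexity of $x\mapsto\dist(x,p)$ at $z$: its gradient is $-u_{z,p}$ by \cref{eq:radial-unit}, so $\mu\dist(x,p)\ge\mu D-\mu\ip{u_{z,p}}{\log_z x}$. Next, $\dist(w,x)\ge\ip{q}{\log_z w-\log_z x}$, because $\log_z$ is $1$-Lipschitz by \cref{lem:distance-facts} and $q$ is a unit vector. Adding the two inequalities gives
\[
 \mu\dist(x,p)+\dist(w,x)\ge\mu D+\ip{q}{\log_z w}-\ip{q+\mu u_{z,p}}{\log_z x}\ge\mu D+\ip{q}{\log_z w},
\]
where the last step uses $n\in N_K(z)$ together with $x\in K$. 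Minimizing over $x$ yields $F(w)\ge\mu D+\ip{q}{\log_z w}$. This settles the internal case \cref{eq:internal-answer}, since the opposite inequality $F(z)\le\mu D$ was already noted.

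For the external case we still have to pass from the minorant based at $z$ to a support inequality at $y$. The obstacle is that $\ip{q}{\log_z w}$ is not the same as $\ell+\ip{g}{\log_y w}$. Instead we use the Busemann-type bound $\ip{q}{\log_z w}\ge\ell+\ip{g}{\log_y w}$ whenever $y=\exp_z(\ell q)$ and $g$ is the transported direction. To prove it, consider the function $h(w)=\ip{q}{\log_z w}-\ip{g}{\log_y w}$. On the geodesic through $z$ and $y$ it equals $\ell$. Off that geodesic, the fact that $\log_z$ is $1$-Lipschitz gives $\ip{q}{\log_z w}\ge\ip{q}{\log_z y}-\dist(y,w)=\ell-\dist(y,w)$, which is too weak. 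A cleaner route is to replace the minorant by $w\mapsto\mu D+\ell-\ip{g}{\log_y z}$-type bounds and redo the key step directly at $y$. Concretely, we bound $\dist(w,x)\ge\dist(y,x)+\ip{-\nabla_y\dist(y,x)}{\log_y w}$ using convexity of $\dist(\cdot,x)$, and then we need $\mu\dist(x,p)+\dist(y,x)\ge\mu D+\ell$ for all $x\in K$, together with control of the gradient term. This is where I expect the main difficulty. The gradient of $\dist(\cdot,x)$ at $y$ depends on $x$, so we cannot use a single fixed $g$ in that bound. The correct fix is to use the $1$-Lipschitz property of $\log_y$ in the form $\dist(w,x)\ge\ip{g}{\log_y w-\log_y x}$, together with the inequality $-\ip{g}{\log_y x}\ge\ell-\ip{q}{\log_z x}$ for $x$ in the halfspace. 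The latter follows from $\ip{q}{\log_z x}\le-\mu\ip{u}{\log_z x}$ and a comparison of $\log_y x$ with $\log_z x$ along the geodesic. We expect this comparison to follow from convexity of the Busemann-like function $x\mapsto\ip{-g}{\log_y x}$, and we would verify it explicitly in the hyperboloid model using \cref{eq:sign}.

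Finally, $y\notin K$ for $\ell>0$. Indeed, if $y$ were in $K$, then $F(y)=\mu\dist(y,p)\le\mu D+\mu\ell<\mu D+\ell$, which contradicts the value formula just established.
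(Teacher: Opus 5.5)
Your internal case is correct and is the paper's argument: source support at $z$, the normal-cone inequality, and contraction of $\log_z$. It in fact gives more than you use. Evaluating your minorant $F_{K,p}(w)\ge\mu D+\ip{q}{\log_z w}$ at $w=y=\exp_z(\ell q)$ already yields $F_{K,p}(y)\ge\mu D+\ell$, so the value in \eqref{eq:external-answer} and the conclusion $y\notin K$ are established.

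What remains unproved is the subgradient claim $g=-\log_y z/\ell\in\partial F_{K,p}(y)$. Both of your routes reduce it to the same two-base-point comparison
\[
 \ip{q}{\log_z x}-\ip{g}{\log_y x}\ge\ell\qquad\text{for all }x\in\HH^d ,
\]
which is your ``Busemann-type bound''. You never establish it, and the justification you sketch would not work:
\begin{itemize}
\item The normal-cone condition plays no role here, since the inequality must hold for every $x$.
\item The map $x\mapsto\ip{-g}{\log_y x}$ is not geodesically convex on $\HH^d$. Take a geodesic meeting the line through $z$ and $y$ orthogonally at distance $a>0$ beyond $y$, parametrized by arclength $s$ from its foot, and let $r$ be the distance from $y$, so $\cosh r=\cosh a\cosh s$. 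Along it the map equals $-\tanh a\cdot r\coth r$, which is strictly concave at $s=0$.
\item In any case, a property of $\log_y$ alone cannot compare $\log_y$ with $\log_z$.
\end{itemize}

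The missing idea is joint convexity of distance, \Cref{lem:distance-facts}, used at the pair $(y,z)$ as the paper does. Along the product geodesic from $(y,z)$ to $(w,x)$ the distance is convex. Its initial derivative is $\ip{g}{\log_y w}-\ip{q}{\log_z x}$, because the partial gradients of $\dist$ at $(y,z)$ are $g$ and $-q$. Hence
\[
 \dist(w,x)\ge\ell+\ip{g}{\log_y w}-\ip{q}{\log_z x}
 \qquad\text{for all }w,x\in\HH^d .
\]
Adding $\mu\dist(x,p)\ge\mu D+\ip{q}{\log_z x}$ for $x\in K$ and minimizing over $x$ gives $F_{K,p}(w)\ge\mu D+\ell+\ip{g}{\log_y w}$ directly. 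Your comparison is the special case $w=x$.

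Alternatively, put $\gamma(t)=\exp_z(tq)$ and $\psi(t)=\ip{\dot\gamma(t)}{\log_{\gamma(t)}x}=-\tfrac{d}{dt}\tfrac12\dist(\gamma(t),x)^2$. Since $\nabla^2\tfrac12\dist(\cdot,x)^2\succeq I$, you get $\psi'\le-1$. With $\psi(0)=\ip{q}{\log_z x}$ and $\psi(\ell)=\ip{g}{\log_y x}$, this is exactly the comparison.

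With either argument inserted, your internal minorant plus the Busemann-type bound closes the proof, as does your $\log_y$-contraction route.
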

\begin{proof}
For $x\in K$, radial convexity and the normal inequality give
\begin{equation}\label{eq:source-support}
 \mu\dist(x,p)
 \ge\mu D-\mu\ip{u_{z,p}}{\log_z x}
 \ge\mu D+\ip{q}{\log_z x}.
\end{equation}
If $y=\exp_z(\ell q)$, joint convexity of distance gives, for all $w$ and
$x\in K$,
\[
 \dist(w,x)\ge
 \ell+\left\langle-\frac{\log_yz}{\ell},\log_yw\right\rangle
 -\ip{q}{\log_zx}.
\]
Adding \cref{eq:source-support} and minimizing over $x$ proves the selected
support at $y$. Equality of values follows by using the contact $x=z$.
For $y=z$, log contraction gives
\[
 \ip{q}{\log_zx}+\dist(w,x)\ge\ip{q}{\log_zw},
\]
which proves the selected support at $z$. Finally,
\[
 \ipL{n}{y}=\sinh\ell\,(1+\mu\ip{q}{u_{z,p}})>0,
\]
whereas $\ipL{n}{x}\le0$ for all $x\in K$. Thus $y\notin K$.
\end{proof}

\section{Preserving Historical Contacts}
\label{sec:convex-domain-geometry}

The preceding section reduces exact oracle consistency to supporting
normals of the carrier. We now construct normals that support the whole
past public domain and every future public ball. The current ball yields
a normal reserve; a scalar persistence inequality ensures that this
reserve remains sufficient as the candidate sphere shrinks.

Throughout this section, fix
\begin{equation}\label{eq:cd-parameters}
 0<\mu\le\frac1{16},\qquad
 \tau=\frac{2\mu}{1+\mu^2},\qquad
 \kappa=\frac{1-\mu^2}{1+\mu^2},\qquad
 \eta=\frac{\tau}{\kappa}=\frac{2\mu}{1-\mu^2},\qquad
 a_0=\frac1\kappa.
\end{equation}
Thus \(\tau^2+\kappa^2=1\), \(a_0^2=1+\eta^2\), and \(0<\eta<1\).
For \(S>0\), define the public-ball radius by
\begin{equation}\label{eq:cd-ball-radius}
 \sinh r_S=\eta\tanh S.
\end{equation}

\subsection{Lorentz halfspaces and intrinsic balls}

\begin{lemma}[Halfspace--ball criterion]\label[lemma]{lem:cd-ball-criterion}
Let \(V=\mathbb H^d\cap W\), let \(c\in V\), and let \(h\in W\) be
spacelike and nonzero.  Write \(\alpha=\sqrt{\langle h,h\rangle_L}\).
The Lorentz halfspace
\[
 H_h^-\cap V=\{x\in V:\langle h,x\rangle_L\le0\}
\]
contains the intrinsic closed ball \(\overline B_V(c,r)\) if and only if
\begin{equation}\label{eq:cd-ball-criterion}
 -\langle h,c\rangle_L\ge\alpha\sinh r.
\end{equation}
When \(\langle h,c\rangle_L\le0\), the distance in \(V\) from \(c\) to
the boundary hyperplane is
\[
 \operatorname{arsinh}
 \left(\frac{-\langle h,c\rangle_L}{\alpha}\right).
\]
For \(h=0\), the halfspace is all of \(V\), and containment is automatic.
\end{lemma}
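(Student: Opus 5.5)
The plan is to reduce everything to an explicit computation of the signed distance from $c$ to the hyperplane $\{x\in V:\langle h,x\rangle_L=0\}$, using unit-speed geodesics emanating from $c$ inside $V$. Every point of $\overline B_V(c,r)$ has the form $x=\cosh t\,c+\sinh t\,v$ with $0\le t\le r$ and $v\in T_cV$ a unit vector. Here $T_cV=W\cap c^{\perp_L}$, and $T_cV$ is Euclidean because $c$ is timelike. Then
\[
 \langle h,x\rangle_L=\cosh t\,\langle h,c\rangle_L+\sinh t\,\langle h,v\rangle_L .
\]
Decompose $h=\beta c+h_T$ with $h_T\in T_cV$. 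Since $h\in W$ and $c\in W$, this decomposition stays in $W$. We get $\langle h,c\rangle_L=-\beta$ and $\alpha^2=\langle h,h\rangle_L=-\beta^2+\norm{h_T}^2$, so $\norm{h_T}^2=\alpha^2+\langle h,c\rangle_L^2$. Maximizing over unit $v$ gives
\[
 \max_{\norm v=1}\langle h,x\rangle_L=\cosh t\,\langle h,c\rangle_L+\sinh t\sqrt{\alpha^2+\langle h,c\rangle_L^2}.
\]

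Next I would show that the containment holds iff this maximum is $\le0$ for all $t\in[0,r]$. Write $\gamma=-\langle h,c\rangle_L$. If $\gamma<0$, then $t=0$ already fails, and \eqref{eq:cd-ball-criterion} fails as well, so both sides agree. If $\gamma\ge0$, the condition at a given $t$ reads $\sinh t\sqrt{\alpha^2+\gamma^2}\le\gamma\cosh t$. Squaring (both sides are nonnegative) gives $\sinh^2t(\alpha^2+\gamma^2)\le\gamma^2\cosh^2t$, which simplifies to $\alpha^2\sinh^2t\le\gamma^2$, i.e.\ $\alpha\sinh t\le\gamma$. This is monotone in $t$, so validity on $[0,r]$ is equivalent to $\alpha\sinh r\le\gamma$, which is \eqref{eq:cd-ball-criterion}.

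The distance statement follows from the same computation. For $\gamma\ge0$, the ball of radius $r$ is contained in the halfspace exactly when $r\le\operatorname{arsinh}(\gamma/\alpha)$. Moreover, at $t_*=\operatorname{arsinh}(\gamma/\alpha)$ the maximizing direction $v=h_T/\norm{h_T}$ reaches the boundary, since equality holds there. The boundary point is also nonempty in $V$: this needs $h_T\neq0$, which follows from $\norm{h_T}^2=\alpha^2+\gamma^2>0$. So the distance from $c$ to the boundary hyperplane equals $t_*$. The case $h=0$ is trivial.

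The only delicate point is ensuring that the maximizing direction lies in $T_cV$ rather than merely in $T_c\HH^d$. This is exactly why I would take the component of $h$ inside $W$. That step is what uses the hypothesis $h\in W$, and it is where I would be careful. The rest is routine algebra.
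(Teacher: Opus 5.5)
Your proof is correct and is essentially the paper's argument: the paper also maximizes $\langle h,x\rangle_L$ over $x=\cosh t\,c+\sinh t\,e$ with $e\in T_cV$ unit, obtaining $\langle h,c\rangle_L\cosh t+\sqrt{\alpha^2+\langle h,c\rangle_L^2}\,\sinh t$. Your extremal boundary point $\cosh t_*\,c+\sinh t_*\,h_T/\norm{h_T}$ simplifies to the paper's perpendicular foot $(c+\lambda m)/\sqrt{1+\lambda^2}$, with $m=h/\alpha$ and $\lambda=\gamma/\alpha$. For the distance claim, showing that no boundary point lies closer than $t_*$ needs the strict form of your squared inequality ($\alpha\sinh t<\gamma$ makes the maximum strictly negative), which the same computation gives.
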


\begin{proof}
Put \(m=h/\alpha\) and \(\lambda=-\langle m,c\rangle_L\).  If
\(\lambda\ge0\), then
\[
 z=\frac{c+\lambda m}{\sqrt{1+\lambda^2}}
\]
belongs to \(V\cap m^{\perp_L}\), and
\(d(c,z)=\operatorname{arsinh}\lambda\).  This is the perpendicular foot
onto the totally geodesic hyperplane.  Equivalently, maximizing
\(\langle h,x\rangle_L\) over points
\(x=\cosh r\,c+\sinh r\,e\), \(e\in T_cV\), \(\|e\|=1\), gives
\[
 \langle h,c\rangle_L\cosh r+
 \sqrt{\alpha^2+\langle h,c\rangle_L^2}\,\sinh r.
\]
This quantity is nonpositive exactly when
\(-\langle h,c\rangle_L\ge\alpha\sinh r\).  If the center itself is
outside the halfspace, containment fails.  These observations also prove
the asserted distance formula.
\end{proof}

\subsection{A scalar persistence inequality}

The halfspace--ball criterion becomes a ratio involving the distance
from a contact to the hidden minimizer. Future recentering increases its
scalar argument $a$ while keeping the physical contact fixed. We need
the ratio to remain above one threshold for every larger argument;
pointwise validity at the current stage would be insufficient.

For an angle parameter \(\vartheta\in(\mu,1]\), put
\begin{equation}\label{eq:cd-scalar-functions}
 A_{\vartheta}=1+\mu^2-2\mu\vartheta,\qquad
 B_{\vartheta}(a)=(\vartheta-\mu)a+\frac\mu a,
 \qquad
 F_{\vartheta}(a)=
 \frac{B_{\vartheta}(a)}
 {\sqrt{A_{\vartheta}-\mu^2/a^2}},\quad a\ge1.
\end{equation}
All denominators are positive: since \(\vartheta\le1\),
\begin{equation}\label{eq:cd-positive-denominator}
 A_{\vartheta}-\frac{\mu^2}{a^2}
 \ge (1-\mu)^2-\mu^2=1-2\mu>0.
\end{equation}

\begin{lemma}[Perfect square at the reference angle]
\label[lemma]{lem:cd-perfect-square}
For every \(a\ge1\),
\begin{equation}\label{eq:cd-perfect-square}
 B_{\tau}(a)^2-
 \eta^2\left(A_{\tau}-\frac{\mu^2}{a^2}\right)
 =\mu^2\left(\kappa a-\frac{1}{\kappa a}\right)^2.
\end{equation}
Consequently \(F_{\tau}(a)\ge\eta\), with equality precisely at
\(a=a_0\).  If \(\vartheta\ge\tau\), then
\(F_{\vartheta}(a)\ge\eta\) for every \(a\ge1\).
\end{lemma}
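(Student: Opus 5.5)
The plan is to verify the identity \eqref{eq:cd-perfect-square} by direct algebra in the parameters of \eqref{eq:cd-parameters}, and then read off the two consequences from positivity and monotonicity.

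\textbf{Step 1: simplify the reference-angle quantities.} With \(\tau=2\mu/(1+\mu^2)\) one has
\[
\tau-\mu=\frac{\mu(1-\mu^2)}{1+\mu^2}=\mu\kappa,
\qquad\text{so}\qquad
B_\tau(a)=\mu\Bigl(\kappa a+\frac1a\Bigr).
\]
Similarly,
\[
A_\tau=1+\mu^2-\frac{4\mu^2}{1+\mu^2}=\frac{(1-\mu^2)^2}{1+\mu^2}.
\]
Multiplying by \(\eta^2=4\mu^2/(1-\mu^2)^2\) gives \(\eta^2A_\tau=4\mu^2/(1+\mu^2)\).

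\textbf{Step 2: expand and use \(1+\eta^2=a_0^2=1/\kappa^2\).} Divide the left side of \eqref{eq:cd-perfect-square} by \(\mu^2\). It becomes
\[
\kappa^2a^2+2\kappa+\frac{1+\eta^2}{a^2}-\frac{4}{1+\mu^2}
=\kappa^2a^2+\frac{1}{\kappa^2a^2}+\Bigl(2\kappa-\frac4{1+\mu^2}\Bigr).
\]
The bracket equals \((2-2\mu^2-4)/(1+\mu^2)=-2\). Hence the expression is \((\kappa a-1/(\kappa a))^2\), which is the identity.

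\textbf{Step 3: the bound \(F_\tau\ge\eta\).} By \eqref{eq:cd-positive-denominator} the denominator of \(F_\tau\) is positive. Since \(\tau>\mu\) (because \(\mu\le1/16\)), we also have \(B_\tau(a)>0\). The identity therefore gives \(B_\tau^2\ge\eta^2(A_\tau-\mu^2/a^2)\), hence \(F_\tau(a)\ge\eta\). Equality holds iff \(\kappa a=1/(\kappa a)\), i.e.\ \(a=1/\kappa=a_0\). This value satisfies \(a_0\ge1\), so it lies in the admissible range.

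\textbf{Step 4: the case \(\vartheta\ge\tau\).} The idea is monotonicity in \(\vartheta\) for fixed \(a\ge1\).
\begin{itemize}
\item \(B_\vartheta(a)\) is increasing in \(\vartheta\) with slope \(a>0\), and it is positive for \(\vartheta>\mu\).
\item \(A_\vartheta-\mu^2/a^2\) is decreasing in \(\vartheta\) and stays positive by \eqref{eq:cd-positive-denominator}.
\end{itemize}
Hence \(F_\vartheta(a)\ge F_\tau(a)\ge\eta\).

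There is no real obstacle here. The only point needing care is the bookkeeping in Step 2, specifically recognizing \(1+\eta^2=1/\kappa^2\), which is what makes the square close.
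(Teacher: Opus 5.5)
Your proposal is correct and follows essentially the same route as the paper. Like the paper, you simplify $\tau-\mu=\mu\kappa$, $A_\tau$ and $\eta^2$, expand in powers of $a$ (using $1+\eta^2=1/\kappa^2$ to get the $a^{-2}$ coefficient), and then obtain the case $\vartheta\ge\tau$ from the monotonicity of $F_\vartheta(a)$ in $\vartheta$.
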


\begin{proof}
Directly from \eqref{eq:cd-parameters},
\[
 \tau-\mu=\mu\kappa,\qquad
 A_{\tau}=\frac{(1-\mu^2)^2}{1+\mu^2},\qquad
 \eta^2=\frac{4\mu^2}{(1-\mu^2)^2}.
\]
Expanding the left side of \eqref{eq:cd-perfect-square}, the coefficients
of \(a^2\), \(1\), and \(a^{-2}\) are respectively
\(\mu^2\kappa^2\), \(-2\mu^2\), and \(\mu^2/\kappa^2\).
This proves the identity and its equality case.  For fixed \(a\), increasing
\(\vartheta>\mu\) increases the positive numerator \(B_{\vartheta}(a)\)
and decreases its positive squared denominator.  Thus
\(F_{\vartheta}(a)\) is increasing in \(\vartheta\), proving the last claim.
\end{proof}

\begin{lemma}[Persistence on the right branch]\label[lemma]{lem:cd-right-branch}
Suppose \(\mu<\vartheta\le1\), \(a\ge a_0\), and
\(F_{\vartheta}(a)\ge\eta\).  Then
\[
 F_{\vartheta}(a')\ge\eta\qquad\text{for every }a'\ge a.
\]
\end{lemma}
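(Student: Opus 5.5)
The plan is to replace $F_{\vartheta}(a)\ge\eta$ by a polynomial-type inequality and then use convexity in the variable $t=a^2$. Since $B_{\vartheta}(a)>0$ for $\vartheta>\mu$ and $a\ge1$, and the denominator is positive by \eqref{eq:cd-positive-denominator}, we have $F_{\vartheta}(a)\ge\eta$ if and only if
\[
 G_{\vartheta}(a):=B_{\vartheta}(a)^2-\eta^2\Bigl(A_{\vartheta}-\frac{\mu^2}{a^2}\Bigr)\ge0 .
\]
Expanding $B_{\vartheta}(a)^2$ and using $a_0^2=1+\eta^2$ from \eqref{eq:cd-parameters} gives
\[
 G_{\vartheta}(a)=g_{\vartheta}(t):=(\vartheta-\mu)^2\,t+\bigl(2\mu(\vartheta-\mu)-\eta^2A_{\vartheta}\bigr)+\frac{\mu^2a_0^2}{t},
 \qquad t=a^2 .
\]
Both outer coefficients are positive, so $g_{\vartheta}$ is a convex function of $t>0$.

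First I dispose of the case $\vartheta\ge\tau$. Here \cref{lem:cd-perfect-square} already gives $F_{\vartheta}(a')\ge\eta$ for every $a'\ge1$, so there is nothing to prove.

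Next, let $\mu<\vartheta<\tau$. I claim $g_{\vartheta}(a_0^2)<0$. By the identity \eqref{eq:cd-perfect-square}, $g_{\tau}(a_0^2)=0$. Moreover $G_{\vartheta}(a)$ is strictly increasing in $\vartheta$ for fixed $a$: the positive quantity $B_{\vartheta}(a)$ strictly increases, and $A_{\vartheta}$ strictly decreases. This is the same monotonicity used at the end of \cref{lem:cd-perfect-square}, now applied to the difference rather than the ratio. Hence $g_{\vartheta}(a_0^2)<g_{\tau}(a_0^2)=0$.

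Put $t_1=a^2\ge a_0^2$. By hypothesis $g_{\vartheta}(t_1)\ge0$, and since $g_{\vartheta}(a_0^2)<0$ we must have $t_1>a_0^2$. For $t\ge t_1$, convexity of $g_{\vartheta}$ says the chord slope through $a_0^2<t_1\le t$ is nondecreasing, so
\[
 g_{\vartheta}(t)\ \ge\ g_{\vartheta}(t_1)+(t-t_1)\,\frac{g_{\vartheta}(t_1)-g_{\vartheta}(a_0^2)}{t_1-a_0^2}\ \ge\ g_{\vartheta}(t_1)\ \ge0 .
\]
Translating back with $t=a'^2$ gives $F_{\vartheta}(a')\ge\eta$ for all $a'\ge a$.

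The only step needing care is the sign $g_{\vartheta}(a_0^2)\le0$ below the reference angle. It rests on identifying $g_{\tau}(a_0^2)=0$ through the perfect-square identity and on the strict monotonicity in $\vartheta$. Everything else is the elementary convexity of $ct+d+e/t$.
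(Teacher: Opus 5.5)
Your proof is correct, and its skeleton matches the paper's. The case $\vartheta\ge\tau$ goes to the perfect-square lemma. For $\mu<\vartheta<\tau$, strict monotonicity in $\vartheta$ shows the inequality fails strictly at $a_0$, and a one-valley shape in $w=a^2$ then forces $a$ onto the rising part.

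The two arguments differ only in how that shape is obtained. The paper keeps the ratio $F_{\vartheta}(\sqrt w)^2=(bw+\mu)^2/(Aw-\mu^2)$, differentiates it, and reads off a single sign change at an explicit turning point $w_*$, so that $F_{\vartheta}$ decreases and then increases. You instead clear the denominator and study $g_{\vartheta}(t)=(\vartheta-\mu)^2t+\text{const}+\mu^2a_0^2/t$, whose convexity is immediate from its form, and you finish with the three-chord inequality. Your version avoids the quotient-rule calculation and works directly with the sign condition actually needed. The paper's version locates the minimum of $F_{\vartheta}$ explicitly, but the lemma never uses that location.

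One small point: your last step needs the strict inequality $g_{\vartheta}(a_0^2)<0$ to rule out $t_1=a_0^2$. You did prove it, but your closing remark states it only as $\le0$.
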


\begin{proof}
The case \(\vartheta\ge\tau\) follows from
\Cref{lem:cd-perfect-square}.  Suppose \(\mu<\vartheta<\tau\).
Write \(b=\vartheta-\mu>0\), \(A=A_{\vartheta}>0\), and \(w=a^2\).
Then
\[
 F_{\vartheta}(\sqrt w)^2=\frac{(bw+\mu)^2}{Aw-\mu^2},
\]
whose derivative is
\[
 \frac{(bw+\mu)\,[bAw-2b\mu^2-A\mu]}
 {(Aw-\mu^2)^2}.
\]
It therefore decreases and then increases, with at most one turning point,
located at
\[
 w_* = \frac\mu b+\frac{2\mu^2}{A}.
\]
Strict monotonicity in the angle parameter gives
\(F_{\vartheta}(a_0)<F_{\tau}(a_0)=\eta\).  Since \(a\ge a_0\) and
\(F_{\vartheta}(a)\ge\eta\), the point \(a\) must lie on the increasing
branch at or beyond its crossing of the level \(\eta\).  All larger
arguments remain above that level.
\end{proof}

\subsection{Radial sources on the current public domain}

We next connect the scalar inequalities to candidate-independent
contact data. Orthogonality makes the radial source identical on the
public domain for all surviving minimizers, while the current ball
controls the normal produced by contact optimality.

The candidate sphere and its common projection onto the public subspace
are the nested geometry of Criscitiello and
Boumal~\cite[Appendix~A.2, Lemma~26]{CB23}. The distance identity and
radial-direction decomposition below are Lorentz-coordinate
consequences of that geometry. The public domain and the normal reserve
used to preserve contacts are additional parts of our construction.

Let \(V=\mathbb H^d\cap W\) be a public totally geodesic subspace,
put \(U=W^{\perp_{\mathrm L}}\), let \(c\in V\), \(S>0\), and let candidate minimizers be
\begin{equation}\label{eq:cd-candidate-sphere}
 p_v=\cosh S\,c+\sinh S\,v,
 \qquad v\in U,\quad \|v\|=1.
\end{equation}
Let \(P\subseteq V\) be a compact geodesically convex set containing
\(\overline B_V(c,r_S)\).  The set \(P\) may, in particular, be the
geodesic convex hull of all previously installed public balls.

For \(z\in V\), the following quantities are independent of the candidate:
\begin{align}
 D&=d(z,p_v)=\operatorname{arcosh}
       \bigl(\cosh S\cosh d(c,z)\bigr),\label{eq:cd-public-distance}\\
 a&=\frac{\sinh D}{\sinh S}\ge1,\nonumber\\
 u_V&=\frac{\cosh S\,c-\cosh D\,z}{\sinh D}\in T_zV.\nonumber
\end{align}
The unit vector from \(z\) toward \(p_v\) decomposes as
\begin{equation}\label{eq:cd-radial-direction}
 u_v=\frac{p_v-\cosh D\,z}{\sinh D}
     =u_V+\frac1a v,
 \qquad \|u_V\|^2=1-\frac1{a^2}.
\end{equation}
The public restriction of the radial source \(\mu d(\cdot,p_v)\) thus has
gradient \(-\mu u_V\).

\begin{lemma}[The normal reserve supplied by the current ball]
\label[lemma]{lem:cd-normal-reserve}
Let \(z\in P\), and let \(n_V\in T_zV\) satisfy
\[
 \langle n_V,\log_zx\rangle\le0\qquad\text{for all }x\in P.
\]
Write \(\alpha=\|n_V\|\).  Then
\begin{equation}\label{eq:cd-normal-reserve}
 -a\langle n_V,u_V\rangle\ge\eta\alpha.
\end{equation}
Moreover,
\begin{equation}\label{eq:cd-ball-location}
 a^2=1+\coth^2S\,\sinh^2d(c,z),
 \qquad
 d(c,z)<r_S\ \Longleftrightarrow\ a<a_0.
\end{equation}
In this strict-interior case, \(n_V=0\).
\end{lemma}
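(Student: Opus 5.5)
The plan is to translate the normal-cone condition into a Lorentz halfspace containing the current public ball. Then \Cref{lem:cd-ball-criterion} gives a scalar inequality, and a direct Lorentz computation of \(\langle n_V,u_V\rangle\) turns it into \eqref{eq:cd-normal-reserve}. The case \(n_V=0\) is trivial, so assume \(\alpha=\|n_V\|>0\).

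\emph{Step 1: the normal defines a halfspace containing the ball.} Regard \(n_V\in T_zV\subset W\) as a Lorentz vector; it is spacelike with \(\langle n_V,n_V\rangle_L=\alpha^2\) and \(\langle n_V,z\rangle_L=0\). By the sign identity \eqref{eq:sign}, the hypothesis \(\langle n_V,\log_z x\rangle\le0\) for all \(x\in P\) says exactly that \(P\subseteq H^-_{n_V}\cap V\). Since \(\overline B_V(c,r_S)\subseteq P\), \Cref{lem:cd-ball-criterion} with \(h=n_V\) yields
\[
 -\langle n_V,c\rangle_L\ge\alpha\sinh r_S=\alpha\,\eta\tanh S,
\]
using \eqref{eq:cd-ball-radius}.

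\emph{Step 2: compute the pairing with \(u_V\).} Since \(n_V\perp_L z\), the definition of \(u_V\) gives
\[
 \langle n_V,u_V\rangle=\frac{\cosh S\,\langle n_V,c\rangle_L}{\sinh D}.
\]
Multiplying by \(-a=-\sinh D/\sinh S\) gives \(-a\langle n_V,u_V\rangle=-\coth S\,\langle n_V,c\rangle_L\). By Step 1 this is at least \(\coth S\cdot\alpha\eta\tanh S=\eta\alpha\), which is \eqref{eq:cd-normal-reserve}. Note that \(D\ge S>0\) by \eqref{eq:cd-public-distance}, so \(u_V\) is well defined.

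\emph{Step 3: the formula for \(a\) and the ball location.} Write \(d=d(c,z)\). From \(\cosh D=\cosh S\cosh d\),
\[
 \sinh^2D=\cosh^2S(1+\sinh^2d)-1=\sinh^2S+\cosh^2S\sinh^2d.
\]
Dividing by \(\sinh^2S\) gives \(a^2=1+\coth^2S\sinh^2d\). Since \(a_0^2=1+\eta^2\), the inequality \(a<a_0\) holds iff \(\coth S\sinh d<\eta\), i.e. iff \(\sinh d<\eta\tanh S=\sinh r_S\), i.e. iff \(d<r_S\).

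\emph{Step 4: the strict-interior case.} If \(d(c,z)<r_S\), then \(z\) lies in the relative interior of \(\overline B_V(c,r_S)\subseteq P\). Suppose \(n_V\neq0\), and take \(x=\exp_z(tn_V)\in P\) for small \(t>0\). Then \(\langle n_V,\log_zx\rangle=t\alpha^2>0\), contradicting the normal condition. Hence \(n_V=0\).

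No step is a real obstacle. The only point needing care is Step 1: one must check that the tangent normal, viewed in \(W\), is the right Lorentz vector \(h\) for \Cref{lem:cd-ball-criterion}, and that its halfspace passes through \(z\), which holds because \(\langle n_V,z\rangle_L=0\).
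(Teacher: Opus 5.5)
Your proposal is correct and follows the paper's proof essentially step for step. The sign identity turns the normal-cone condition into a Lorentz halfspace containing $P\supseteq\overline B_V(c,r_S)$; the halfspace--ball criterion together with tangency at $z$ gives $-a\langle n_V,u_V\rangle=-\coth S\,\langle n_V,c\rangle_L\ge\eta\alpha$; and the right-triangle identity gives the formula for $a$. Your test point $\exp_z(tn_V)$ in Step 4 simply spells out the paper's remark that a relative-interior point of $P$ has zero intrinsic normal cone.
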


\begin{proof}
For \(x\ne z\),
\[
 \log_zx=\frac{d(z,x)}{\sinh d(z,x)}
             \bigl(x-\cosh d(z,x)\,z\bigr).
\]
Because \(n_V\) is tangent at \(z\), its support inequality is equivalent
to \(\langle n_V,x\rangle_L\le0\).  Its Lorentz halfspace therefore
contains \(P\), and hence the current public ball.  \Cref{lem:cd-ball-criterion} gives
\[
 -\langle n_V,c\rangle_L\ge\alpha\sinh r_S
                                    =\alpha\eta\tanh S.
\]
Using \eqref{eq:cd-public-distance} and tangency at \(z\),
\[
 -a\langle n_V,u_V\rangle
 =-\coth S\,\langle n_V,c\rangle_L
 \ge\eta\alpha.
\]
The hyperbolic right-triangle identity in
\eqref{eq:cd-public-distance} also gives
\(\sinh^2D=\sinh^2S+\cosh^2S\sinh^2d(c,z)\), proving
\eqref{eq:cd-ball-location}.  A point strictly inside the current ball is
in the relative interior of \(P\), so its intrinsic normal cone is zero.
The zero-dimensional public subspace causes no exception: its tangent
space itself is zero.
\end{proof}

\begin{lemma}[Public contact optimality]\label[lemma]{lem:cd-public-contact}
For \(y\notin P\), choose a minimizer
\[
 z\in\operatorname*{argmin}_{x\in P}
       \{\mu D(x)+d(y,x)\},
 \qquad D(x)=\operatorname{arcosh}
       \bigl(\cosh S\cosh d(c,x)\bigr).
\]
Put \(q=\log_zy/d(z,y)\), and decompose
\[
 q=q_V+\beta e,\qquad q_V\in T_zV,\quad
 \beta=\|q-q_V\|\ge0.
\]
When \(\beta>0\), \(e\in U\) is unit.  Then
\begin{equation}\label{eq:cd-contact-normal}
 n_V=q_V+\mu u_V\in N_P^V(z),
\end{equation}
where \(N_P^V(z)\) denotes the intrinsic normal cone used in
\Cref{lem:cd-normal-reserve}.

For an on-domain query \(y\in P\), one can instead take \(z=y\), choose
any unit \(e\in U\), and set
\[
 q_V=-\mu u_V,\qquad
 \beta=\sqrt{1-\mu^2\|u_V\|^2},\qquad
 q=q_V+\beta e.
\]
Then \(\|q\|=1\), \(\beta>0\), and \(n_V=0\in N_P^V(z)\).
\end{lemma}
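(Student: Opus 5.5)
The plan is to prove the off-domain case by a first-order optimality condition for the restricted problem on $P$, and the on-domain case by a direct check.

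\emph{Off-domain case $y\notin P$.} The objective $\phi(x)=\mu D(x)+d(y,x)$ is minimized over the compact convex set $P\subset V$. Since $y\notin P$, we have $z\ne y$, so $x\mapsto d(y,x)$ is differentiable at $z$. Its gradient in $T_z\HH^d$ is $-q$, by \eqref{eq:radial-unit} with $u_{z,y}=q$. Restricted to $V$, the intrinsic gradient is the orthogonal projection onto $T_zV$, namely $-q_V$.

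Next, $D(x)$ is the common distance $d(x,p_v)$ for $x\in V$, by \eqref{eq:cd-public-distance}. It is differentiable at $z$ because $D\ge S>0$. By \eqref{eq:cd-radial-direction}, its intrinsic gradient is the $T_zV$-component of $-u_v$, which is $-u_V$, since $v\in U$ is orthogonal to $T_zV$. Hence $\phi|_V$ is differentiable at $z$ with gradient $-(q_V+\mu u_V)=-n_V$.

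For any $x\in P$, let $\gamma(t)=\exp_z(t\log_zx)$, which stays in $P$ for $t\in[0,1]$ by convexity. Minimality of $z$ gives $\frac{d}{dt}\phi(\gamma(t))|_{t=0^+}\ge0$, i.e.\ $\ip{-n_V}{\log_zx}\ge0$. This is exactly $n_V\in N_P^V(z)$.

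It remains to justify the decomposition of $q$. The tangent space $T_z\HH^d$ is $z^{\perp_{\mathrm L}}$, and it splits orthogonally as $T_zV\oplus U$, since $W=\mathrm{span}(z)\oplus T_zV$ and $U=W^{\perp_{\mathrm L}}$. So $q-q_V\in U$, and when $\beta>0$ we set $e=(q-q_V)/\beta$.

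\emph{On-domain case $y\in P$.} Here the claims are direct. The vectors $u_V$ and $e$ are orthogonal, so
\[
\|q\|^2=\mu^2\|u_V\|^2+\beta^2=1.
\]
Also $\beta^2\ge1-\mu^2>0$, since $\|u_V\|\le1$. Finally $n_V=q_V+\mu u_V=0$, and the zero vector lies in every normal cone.

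\emph{Main obstacle.} The only delicate point is the off-domain differentiability and the projection argument. Intrinsic gradients on the totally geodesic $V$ are projections of ambient gradients, and $T_zV\perp U$ inside $T_z\HH^d$. These facts are routine consequences of the Lorentz decomposition. Existence of the minimizer $z$ follows from compactness of $P$ and continuity of $\phi$.
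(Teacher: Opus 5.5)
Your proposal is correct and follows the paper's proof. The paper also gets $z$ from compactness, uses $y\notin P$ to identify the public gradients $-q_V$ and $-\mu u_V$ of the two terms at the contact, applies first-order optimality over $P$, and checks the on-domain formulas directly. Your version simply spells out details the paper leaves implicit: the splitting $T_z\HH^d=T_zV\oplus U$, projecting ambient gradients onto $T_zV$, and the one-sided derivative along $\exp_z(t\log_z x)$ inside $P$.
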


\begin{proof}
Compactness gives the minimizing contact.  Since \(y\notin P\), the
distance term is smooth at that contact, with public gradient \(-q_V\).
The other term has gradient \(-\mu u_V\).  First-order optimality for
the convex contact objective over \(P\) gives
\(q_V+\mu u_V\in N_P^V(z)\).  The on-domain formulas give a unit vector
and the zero normal directly; \(\beta>0\) follows from \(\mu<1\).
\end{proof}

\subsection{A fixed residual latitude preserves all archived balls}

The normal reserve now pays for a single residual cut. The next
proposition allows the contact angle to depend on the query while
preserving the scalar inequality at every later stage.

\begin{proposition}[Small-latitude contact routing]
\label[proposition]{prop:cd-small-latitude}
Use the public data \(z,q_V,\beta,n_V\) of
\Cref{lem:cd-public-contact}, and write \(\alpha=\|n_V\|\).
If \(\beta>0\), assume \(\dim U\ge2\) and retain the candidates
\begin{equation}\label{eq:cd-latitude}
 \langle e,v\rangle=-\eta.
\end{equation}
If \(\beta=0\), retain every candidate, without imposing a cut.
For every retained candidate define
\[
 \vartheta=-\langle q,u_v\rangle,
 \qquad n_v=q+\mu u_v\in T_z\mathbb H^d.
\]
Then \(\mu<\vartheta\le1\), and \(\vartheta\) is independent of the
retained candidate.  The halfspace
\[
 H_v^- =\{x\in\mathbb H^d:\langle n_v,x\rangle_L\le0\}
\]
contains \(P\), contains \(p_v\) strictly, and has \(z\) on its boundary.
In addition,
\begin{equation}\label{eq:cd-all-future-scalar}
 F_{\vartheta}(a')\ge\eta\qquad\text{for every }a'\ge a.
\end{equation}
\end{proposition}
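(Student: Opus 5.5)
The proof combines orthogonality of $U$ and $W$ with a direct algebraic computation. After that, the scalar persistence reduces to \Cref{lem:cd-perfect-square} and \Cref{lem:cd-right-branch}.

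\textbf{Step 1: decompose $n_v$ and derive the halfspace claims.} Since $v,e\in U=W^{\perp_{\mathrm L}}$ and $T_zV\subset W$, both $v$ and $e$ are Lorentz-orthogonal to every point of $V$ and to $T_zV$. By \eqref{eq:cd-radial-direction},
\[
 n_v=q+\mu u_v=n_V+\beta e+\frac{\mu}{a}v .
\]
For $x\in P\subset V$ this gives $\ipL{n_v}{x}=\ipL{n_V}{x}\le0$, by \Cref{lem:cd-public-contact} and the sign identity \eqref{eq:sign}. Tangency of $n_v$ at $z$ puts $z$ on the boundary of $H_v^-$. The candidate $p_v$ satisfies
\[
 \ipL{n_v}{p_v}=\sinh D\,\ip{n_v}{u_v}=\sinh D\,(\mu-\vartheta),
\]
so strict containment of $p_v$ is equivalent to $\vartheta>\mu$. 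The bound $\vartheta\le1$ is Cauchy--Schwarz. For candidate independence, compute $\vartheta=-\ip{q_V}{u_V}+\beta\eta/a$ using $\ip ev=-\eta$ (or $\beta=0$); every term here is public.

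\textbf{Step 2: the current scalar inequality.} I would rewrite both ingredients of $F_\vartheta(a)$ in terms of the normal. First, $\vartheta-\mu=-\ip{n_v}{u_v}$, and expanding with $\ip ev=-\eta$ gives
\[
 B_\vartheta(a)=-a\ip{n_V}{u_V}+\beta\eta .
\]
Second, $A_\vartheta-\mu^2/a^2=\norm{n_v}^2-\mu^2/a^2=\alpha^2+\beta^2-2\beta\eta\mu/a\le\alpha^2+\beta^2$. The normal reserve \eqref{eq:cd-normal-reserve} then yields
\[
 B_\vartheta(a)\ge\eta(\alpha+\beta)\ge\eta\sqrt{\alpha^2+\beta^2},
\]
hence $F_\vartheta(a)\ge\eta$. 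Moreover $\alpha+\beta>0$: if $\beta=0$ then $\norm{q_V}=1>\mu\norm{u_V}$, so $n_V\ne0$. Therefore $B_\vartheta(a)>0$. Since $B_\vartheta(a)=(\vartheta-\mu)a+\mu/a$, this alone does not force $\vartheta>\mu$. I would instead get $\vartheta>\mu$ directly: in the interior case from the formula in Step 3, and otherwise from $-a\ip{n_V}{u_V}\ge\eta\alpha$ together with $\norm{q_V}^2=1-\beta^2$.

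\textbf{Step 3: persistence, split by \eqref{eq:cd-ball-location}.} If $a\ge a_0$, \Cref{lem:cd-right-branch} applied to Step 2 gives \eqref{eq:cd-all-future-scalar} immediately. If $a<a_0$, then $n_V=0$ by \Cref{lem:cd-normal-reserve}, so $q_V=-\mu u_V$ and $\beta^2=1-\mu^2(1-s^2)$ with $s=1/a\in(1/a_0,1]$. Then
\[
 \vartheta=\mu(1-s^2)+\eta s\sqrt{1-\mu^2+\mu^2s^2}.
\]
The plan is to show $\vartheta\ge\tau$ on this range and then invoke the last clause of \Cref{lem:cd-perfect-square}. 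At $s=1$ one has $\vartheta=\eta>\tau$. At $s=1/a_0=\kappa$ I expect equality $\vartheta=\tau$, because this is exactly the equality configuration of the perfect square. Note also that $\vartheta>\mu$ is immediate from this formula.

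\textbf{Main obstacle.} The hardest step is the monotonicity of $s\mapsto\vartheta(s)$ on $[\kappa,1]$, or some other proof that it stays above $\tau$. I would first try to square and clear the root, verifying $\vartheta(s)-\tau\ge0$ by a factorization that vanishes at $s=\kappa$, using $\tau^2+\kappa^2=1$ and $\eta=\tau/\kappa$. If that fails, I would fall back on the unimodal shape of $F_\vartheta$ established in the proof of \Cref{lem:cd-right-branch}. Here Step 2 gives the strict inequality $F_\vartheta(a)>\eta$ whenever $a<a_0$, since the cross term $-2\beta\eta\mu/a$ is negative. I would then show the turning point $w_*$ lies to the left of $a^2$ unless $\vartheta\ge\tau$.
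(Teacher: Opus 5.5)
Your Steps 1 and 2 and the split at $a_0$ follow the paper's proof closely and are correct. This covers the decomposition $n_v=n_V+\beta e+(\mu/a)v$, the identities $B_\vartheta(a)=-a\ip{n_V}{u_V}+\beta\eta$ and $A_\vartheta-\mu^2/a^2=\alpha^2+\beta^2-2\beta\eta\mu/a$, the normal reserve, and \Cref{lem:cd-right-branch} when $a\ge a_0$. Your sketch of $\vartheta>\mu$ also closes: $\alpha\ge\sqrt{1-\beta^2}-\mu$, so $\alpha+\beta\ge1-\mu$, and $a(\vartheta-\mu)=B_\vartheta(a)-\mu/a\ge\eta(1-\mu)-\mu=\mu(1-\mu)/(1+\mu)>0$.

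The genuine gap is the strict-interior case $a<a_0$, which you leave open as the ``main obstacle''. There $\vartheta\ge\tau$ is forced, not merely sufficient. If $\vartheta<\tau$, strict monotonicity of $F_\vartheta$ in the angle gives $F_\vartheta(a_0)<F_\tau(a_0)=\eta$ with $a_0>a$, contradicting \eqref{eq:cd-all-future-scalar}. So your fallback through the unimodal shape of $F_\vartheta$ cannot avoid proving $\vartheta\ge\tau$; it only restates it.

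Your primary plan also rests on a false prediction. At $s=\kappa$,
\[
 \vartheta(\kappa)=\mu\tau^2+\tau\sqrt{1-\mu^2\tau^2}>\tau,
\]
since equality would force $\mu\tau\in\{0,1\}$. So no factorization of $\vartheta(s)-\tau$ vanishes at $s=\kappa$, and the interior limit is not the equality configuration of \Cref{lem:cd-perfect-square}. The missing argument is short. In your variable $s=1/a$,
\[
 \frac{d\vartheta}{ds}=-2\mu s+\frac{2\mu(1-\mu^2+2\mu^2s^2)}{(1-\mu^2)\sqrt{1-\mu^2+\mu^2s^2}}\ge2\mu(1-s)\ge0
\]
for $0<s\le1$, because the root is at most one and the numerator is at least $1-\mu^2$. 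Hence
\[
 \vartheta\ge\vartheta(\kappa)\ge\mu\tau^2+\tau(1-\mu^2\tau^2)=\tau+\mu\tau^2(1-\mu\tau)\ge\tau,
\]
using $\sqrt{1-x}\ge1-x$. The last clause of \Cref{lem:cd-perfect-square} then gives \eqref{eq:cd-all-future-scalar}. This is exactly the paper's argument, carried out in the variable $a^{-2}$ instead of $a^{-1}$.
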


\begin{proof}
For \(\beta>0\), the cut gives
\begin{align}
 \vartheta
 &=\mu\left(1-\frac1{a^2}\right)
       -\langle n_V,u_V\rangle+\frac{\beta\eta}{a},
       \label{eq:cd-angle-formula}\\
 B_{\vartheta}(a)
 &=-a\langle n_V,u_V\rangle+\beta\eta
       \ge\eta(\alpha+\beta),\label{eq:cd-B-bound}\\
 A_{\vartheta}-\frac{\mu^2}{a^2}
 &=\alpha^2+\beta^2-\frac{2\mu\beta\eta}{a}
       \le(\alpha+\beta)^2.\label{eq:cd-A-identity}
\end{align}
To check the last identity, expand
\(\|q_V+\mu u_V\|^2+\beta^2\), use
\(\|q_V\|^2+\beta^2=1\), and substitute
\(\langle q_V,u_V\rangle=-\vartheta+\beta\eta/a\).

The triangle inequality gives the useful uniform estimate
\[
 \alpha+\beta
 \ge\sqrt{1-\beta^2}+\beta-\mu\|u_V\|
 \ge1-\mu.
\]
It follows that
\begin{align}
 \vartheta-\mu
 &=\frac{B_{\vartheta}(a)-\mu/a}{a}\nonumber\\
 &\ge\frac{\eta(1-\mu)-\mu/a}{a}
 \ge\frac{\mu(1-\mu)}{(1+\mu)a}>0.
 \label{eq:cd-angle-positive}
\end{align}
The upper bound \(\vartheta\le1\) follows because \(q\) and \(u_v\)
are unit.  In particular, all denominators are positive by
\eqref{eq:cd-positive-denominator}.  Equations
\eqref{eq:cd-B-bound}--\eqref{eq:cd-A-identity} imply
\(F_{\vartheta}(a)\ge\eta\).

If \(a\ge a_0\), \Cref{lem:cd-right-branch} now proves
\eqref{eq:cd-all-future-scalar}.  Suppose instead \(a<a_0\).  By
\Cref{lem:cd-normal-reserve}, \(n_V=0\), so
\(q_V=-\mu u_V\) and
\(\beta=\sqrt{1-\mu^2(1-a^{-2})}\).  Put \(s=a^{-2}\), which belongs
to \((\kappa^2,1]\).  Formula~\eqref{eq:cd-angle-formula} becomes
\[
 \vartheta=H(s):=\mu(1-s)+
 \eta\sqrt{s(1-\mu^2+\mu^2s)}.
\]
For \(0<s\le1\),
\[
 H'(s)=-\mu+
 \frac{\mu(1-\mu^2+2\mu^2s)}
 {(1-\mu^2)\sqrt{s(1-\mu^2+\mu^2s)}}\ge0.
\]
Indeed the square root is at most one and
\(1-\mu^2+2\mu^2s\ge1-\mu^2\).  Since
\(1-\kappa^2=\tau^2\) and \(\eta\kappa=\tau\),
\begin{align*}
 \vartheta&\ge H(\kappa^2)
   =\mu\tau^2+\tau\sqrt{1-\mu^2\tau^2}\\
 &\ge\mu\tau^2+\tau(1-\mu^2\tau^2)
   =\tau+\mu\tau^2(1-\mu\tau)\ge\tau.
\end{align*}
Here \(\sqrt{1-x}\ge1-x\) for \(0\le x\le1\), and
\(\mu\tau<1\).  \Cref{lem:cd-perfect-square} therefore proves
\eqref{eq:cd-all-future-scalar} also in this case.

For \(\beta=0\), no cut is made.  Equations
\eqref{eq:cd-angle-formula}--\eqref{eq:cd-A-identity} hold with every
\(\beta\)-term zero, and \(\alpha\ge1-\mu>0\).  Thus
\(\vartheta>\mu\) by the same estimate
\eqref{eq:cd-angle-positive}, and
\[
 B_{\vartheta}(a)\ge\eta\alpha,
 \qquad A_{\vartheta}-\mu^2/a^2=\alpha^2,
 \qquad F_{\vartheta}(a)\ge\eta.
\]
The nonzero public normal excludes the strict-interior case \(a<a_0\).
\Cref{lem:cd-right-branch} proves
\eqref{eq:cd-all-future-scalar} without imposing any candidate restriction.

Finally, \(n_v\) is tangent at \(z\), so \(z\) is on its hyperplane.
Its projection onto \(T_zV\) is \(n_V\); hence its halfspace contains
\(P\) by the public normal inequality.  Also
\[
 \langle n_v,p_v\rangle_L
 =\sinh D\,\langle q+\mu u_v,u_v\rangle
 =-(\vartheta-\mu)\sinh D<0.
\]
This proves the remaining claims.
\end{proof}

When a cut is made, write
\[
 v=-\eta e+\sigma w,\qquad
 \sigma=\sqrt{1-\eta^2},\qquad
 w\in U\cap e^\perp,\quad\|w\|=1.
\]
It gives the exact recentering of Criscitiello and
Boumal~\cite[Eq.~(7), Lemmas~25--26]{CB23}, expressed here in Lorentz
coordinates with $\eta=\cos\theta$ and $\sigma=\sin\theta$, up to the
orientation of the newly revealed direction:
\begin{equation}\label{eq:cd-recentering}
 \sinh S^+=\sigma\sinh S,
 \qquad
 \cosh S^+\,c^+=\cosh S\,c-\eta\sinh S\,e.
\end{equation}
The enlarged public subspace is
\(V^+=\mathbb H^d\cap\operatorname{span}(W,e)\), and the surviving
minimizers are \(p_w=\cosh S^+c^++\sinh S^+w\).
In particular, the contact direction \(q\) becomes public.

\begin{proposition}[Future public-ball persistence]
\label[proposition]{prop:cd-future-balls}
Fix a contact and a surviving candidate as in
\Cref{prop:cd-small-latitude}.  Suppose that at any later stage
the same physical minimizer admits a representation
\[
 p=\cosh S'\,c'+\sinh S'\,v',\qquad
 c'\in V'=\mathbb H^d\cap W',\quad
 v'\perp_L W',\quad\|v'\|=1,
\]
where \(0<S'\le S\) and the public Lorentz subspace \(W'\) contains
both the old contact point \(z\) and its old direction \(q\).
Then the old halfspace \(H_v^-\) contains the entire intrinsic ball
\(\overline B_{V'}(c',r_{S'})\).
\end{proposition}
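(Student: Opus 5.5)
The plan is to reduce the containment to the halfspace--ball criterion of \Cref{lem:cd-ball-criterion}, applied inside \(V'\), and then to recognise the resulting inequality as the scalar persistence bound \eqref{eq:cd-all-future-scalar} at the new argument \(a'=\sinh D/\sinh S'\). Here \(D=d(z,p)\) is a fixed physical distance and \(S'\le S\), so \(a'\ge a\). This is exactly the range of arguments that \Cref{prop:cd-small-latitude} already covers.

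\textbf{Step 1: orthogonal decomposition of \(n_v\).} Since \(z,q\in W'\) and \(p=\cosh S'\,c'+\sinh S'\,v'\) with \(c'\in W'\) and \(v'\perp_L W'\), the vector
\[
n_v=q+\mu\,\frac{p-\cosh D\,z}{\sinh D}
\]
lies in \(W'\oplus\RR v'\). I would write \(n_v=h'+\lambda v'\), where
\[
h'=q+\mu\,\frac{\cosh S'\,c'-\cosh D\,z}{\sinh D}\in W'
\]
and \(\lambda=\mu\sinh S'/\sinh D=\mu/a'\). For \(x\in V'\) one has \(\ipL{n_v}{x}=\ipL{h'}{x}\), so \(H_v^-\cap V'\) is the Lorentz halfspace of \(h'\) in \(V'\). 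Because \(W'\) and \(v'\) are Lorentz-orthogonal and \(\norm{n_v}^2=\norm q^2+2\mu\ip{q}{u_v}+\mu^2=A_\vartheta\), we get
\[
\alpha'^2:=\ipL{h'}{h'}=A_\vartheta-\mu^2/a'^2 ,
\]
which is positive by \eqref{eq:cd-positive-denominator}. Hence \(h'\) is spacelike and nonzero, and the criterion applies.

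\textbf{Step 2: compute \(-\ipL{h'}{c'}=-\ipL{n_v}{c'}\).} Pair \(n_v\) with \(p\). On one hand, \Cref{prop:cd-small-latitude} gives \(\ipL{n_v}{p}=-(\vartheta-\mu)\sinh D\). On the other hand, by the decomposition of \(p\),
\[
\ipL{n_v}{p}=\cosh S'\,\ipL{n_v}{c'}+\sinh S'\,\lambda .
\]
Solving for \(\ipL{n_v}{c'}\) yields
\[
-\ipL{n_v}{c'}=\tanh S'\Bigl[(\vartheta-\mu)a'+\frac{\mu}{a'}\Bigr]=\tanh S'\,B_\vartheta(a').
\]

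\textbf{Step 3: apply the criterion.} The criterion \eqref{eq:cd-ball-criterion} with radius \(r_{S'}\), where \(\sinh r_{S'}=\eta\tanh S'\), becomes
\[
\tanh S'\,B_\vartheta(a')\ge \eta\tanh S'\,\sqrt{A_\vartheta-\mu^2/a'^2},
\]
that is, \(F_\vartheta(a')\ge\eta\). This holds by \eqref{eq:cd-all-future-scalar}, since \(a'\ge a\). The parameter \(\vartheta\) is unchanged because the physical contact, direction and minimizer are unchanged.

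\textbf{Main obstacle.} The delicate point is the bookkeeping in Step 1. One must check that \(n_v\) has no component outside \(W'\oplus\RR v'\), and that the \(v'\)-component is exactly \(\mu/a'\). Both rely on \(q\in W'\) and \(z\in W'\), which is precisely the stated hypothesis. It is also what makes \(\alpha'\) match the denominator of \(F_\vartheta\) at \(a'\) rather than at \(a\).
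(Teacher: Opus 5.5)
Your proof is correct and follows the paper's argument. It uses the same Lorentz projection of $n_v$ onto $W'$, with residual $(\mu/a')v'$ and squared norm $A_\vartheta-\mu^2/a'^2$, the same identity $-\langle n_v,c'\rangle_L=B_\vartheta(a')\tanh S'$, and the same appeal to \Cref{lem:cd-ball-criterion} together with \eqref{eq:cd-all-future-scalar} at $a'\ge a$. The only cosmetic difference is that you get the center pairing in one step from $\langle n_v,p\rangle_L=-(\vartheta-\mu)\sinh D$, whereas the paper computes $\langle q,c'\rangle_L$ and the radial term separately using $\cosh D=\cosh S'\cosh d(z,c')$.
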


\begin{proof}
The quantities \(D=d(z,p)\), \(\vartheta=-\langle q,u_v\rangle\),
and the physical normal \(n_v=q+\mu u_v\) are unchanged.  Put
\(a'=\sinh D/\sinh S'\ge a\).  The Lorentz projection of \(n_v\)
onto \(W'\) is
\[
 n_{W'}=q+\frac{\mu}{\sinh D}
                  (\cosh S'\,c'-\cosh D\,z),
\]
and its orthogonal residual is \((\mu/a')v'\).  Therefore
\begin{equation}\label{eq:cd-future-projected-norm}
 \langle n_{W'},n_{W'}\rangle_L
   =A_{\vartheta}-\frac{\mu^2}{a'^2}>0.
\end{equation}
The identity uses the full Lorentz form on the enlarged public subspace.

Because \(q\in W'\), tangency at \(z\) and the angle identity give
\[
 \langle q,c'\rangle_L
 =-\frac{\vartheta\sinh D}{\cosh S'}.
\]
Also \(\cosh D=\cosh S'\cosh d(z,c')\).  Consequently
\begin{align}
 -\langle n_v,c'\rangle_L
 &=\frac{\vartheta\sinh D}{\cosh S'}
   -\frac{\mu(\sinh^2D-\sinh^2S')}
          {\cosh S'\sinh D}\nonumber\\
 &=\left((\vartheta-\mu)a'+\frac\mu{a'}\right)\tanh S'
   =B_{\vartheta}(a')\tanh S'.
 \label{eq:cd-future-center-pairing}
\end{align}
The right side is positive.  Combining
\eqref{eq:cd-future-projected-norm},
\eqref{eq:cd-future-center-pairing}, and
\eqref{eq:cd-all-future-scalar} yields
\[
 \frac{-\langle n_{W'},c'\rangle_L}
 {\sqrt{\langle n_{W'},n_{W'}\rangle_L}}
 =F_{\vartheta}(a')\tanh S'
 \ge\eta\tanh S'=\sinh r_{S'}.
\]
\Cref{lem:cd-ball-criterion} proves the desired containment.
\end{proof}

\section{A Resisting Oracle for Arbitrary Queries}\label{sec:oracle}

We now turn the geometric statements into a complete oracle lower bound.
All choices below depend only on the current public state and the new query;
no hidden minimizer is fixed until the transcript and the output are known.

\subsection{State, initialization, and one query}

Fix a horizon $T$ and take $d=T+2$. The state consists of a Lorentz public
subspace $W_t$, its totally geodesic subspace $V_t$, a center $c_t\in V_t$,
a residual radius $S_t>0$, and a compact convex public domain $P_t\subset V_t$.
The candidate minimizers are exactly
\begin{equation}\label{eq:state-candidates}
 \mathcal C_t=
 \{\cosh S_t\,c_t+\sinh S_t\,v:
      v\in W_t^{\perp_{\mathrm L}},\ \norm v=1\}.
\end{equation}
The domain $P_t$ is the closed geodesic convex hull of all public balls
installed so far; in particular it contains
$\B_{V_t}(c_t,r_{S_t})$. We retain the archive of distinct queried points
and their selected contacts, vectors, and answers.

The state geometry $(V_t,\mathcal C_t,c_t,S_t)$ corresponds to
$(I_k,S_k,y_k,r_k)$ in Criscitiello and
Boumal~\cite[Appendix~A.2]{CB23}, with $k$ counting actual cuts rather
than queries. We add the public domain $P_t$ and the contact archive to
make this geometry compatible with unrestricted queries. In particular,
the public balls lie in the growing public subspaces and need not be
nested; they are distinct from the residual balls bounded by the
nested candidate spheres in the earlier construction.

Initially,
\[
 W_0=\spanop\{o\},\quad V_0=P_0=\{o\},\quad c_0=o,\quad S_0=R.
\]
The intrinsic ball of any radius in the zero-dimensional space $V_0$ is
$\{o\}$, so the public-ball invariant holds at initialization.

For a query $y$, proceed as follows.
\begin{enumerate}
\item If $y$ was previously queried, return its cached answer and leave
the state unchanged. No cut or new certificate is needed.

\item For a new query outside $P_t$, minimize the candidate-independent
public contact objective
\begin{equation}\label{eq:oracle-contact}
 z\in\operatorname*{argmin}_{x\in P_t}
 \left\{\mu\arcosh\bigl(\cosh S_t\cosh\dist(c_t,x)\bigr)
             +\dist(y,x)\right\}.
\end{equation}
Put $\ell=\dist(y,z)>0$ and $q=\log_z y/\ell$.
For a new query in $P_t$, set $z=y$, $\ell=0$, and choose the unit vector
$q$ by the on-domain construction in \cref{lem:cd-public-contact}.
In both cases write $q=q_V+\beta e$ when $\beta>0$, and put
\[
 D=\arcosh\bigl(\cosh S_t\cosh\dist(c_t,z)\bigr).
\]

\item If $\beta>0$, impose the fixed residual latitude
$\ip{e}{v}=-\eta$ and perform the exact update
\begin{equation}\label{eq:oracle-update}
 \begin{split}
 W_{t+1}&=W_t\oplus\spanop\{e\},\qquad
 V_{t+1}=\HH^d\cap W_{t+1},\\
 \sinh S_{t+1}&=\sigma\sinh S_t,
       \qquad \sigma=\sqrt{1-\eta^2},\\
 \cosh S_{t+1}\,c_{t+1}
       &=\cosh S_t\,c_t-\eta\sinh S_t\,e,\\
 P_{t+1}&=\overline{\gconv}
       \bigl(P_t\cup\B_{V_{t+1}}(c_{t+1},r_{S_{t+1}})\bigr).
 \end{split}
\end{equation}
If $\beta=0$, make no cut and leave $W,V,c,S,P$ unchanged. This case can
occur only for an exterior query in the already public subspace.
The updates of the public subspace, candidate center, and residual radius
are the nested-sphere update of~\cite[Eq.~(7), Lemmas~25--26]{CB23}.
The last line of \cref{eq:oracle-update} additionally retains the convex
hull of all historical public balls, as required for our contact archive.

\item Return and cache the exact proposed answer
\begin{equation}\label{eq:oracle-answer}
 F_y=\mu D+\ell,\qquad
 g_y=\begin{cases}
       -\log_y z/\ell,&\ell>0,\\
       q,&\ell=0.
      \end{cases}
\end{equation}
Archive the contact $(z,q,D)$ and the candidate-independent scalar
$\vartheta=-\ip{q}{u_{z,p}}$ on the retained candidate sphere.
\end{enumerate}
The symbols $t,t+1$ count queries here; on cached queries and zero-cut
queries the state simply repeats. Every minimization is over a compact set,
so it has a solution. Ties can be settled deterministically, for example by
successive coordinate minimization on the compact set of minimizers. The
adversary's computation is not counted as oracle access.

The residual space has dimension at least $d-t\ge2$ before every possible
cut up to the horizon; hence the prescribed latitude is nonempty. In the
on-domain case one can choose a unit residual direction deterministically
from a fixed ambient coordinate basis. The same update then applies.

\subsection{The invariant includes the entire archive}

\begin{proposition}[Finite-prefix consistency]\label[proposition]{prop:prefix-consistency}
After every finite sequence of at most $T$ queries, the following statements
hold.
\begin{enumerate}
\item The candidate sets are nested subsets of the original sphere
$\{p:\dist(o,p)=R\}$, and have the exact representation
\cref{eq:state-candidates}.
\item $P_t$ is compact, convex, public, and contains the current public ball.
Every archived contact point and contact direction belong to the public
Lorentz subspace $W_t$.
\item For every surviving $p$ and every archived contact $i$, the physical
normal
\begin{equation}\label{eq:archive-normal}
 n_{i,p}=q_i+\mu\frac{p-\cosh D_i\,z_i}{\sinh D_i}
\end{equation}
defines a halfspace $H_{i,p}^-$ containing $P_t$ and containing $p$ strictly.
Moreover it contains every future public ball that can arise by continuing
the stated updates, as long as $p$ survives.
\item For every surviving $p$, the single function
\begin{equation}\label{eq:prefix-fixed-objective}
 K_{t,p}=\overline{\gconv}(P_t\cup\{p\}),\qquad
 F_{t,p}(y)=\min_{x\in K_{t,p}}
             \{\mu\dist(x,p)+\dist(y,x)\}
\end{equation}
realizes every archived value and full selected subgradient simultaneously.
\end{enumerate}
\end{proposition}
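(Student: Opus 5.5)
I will prove all four items simultaneously by induction on the number $t$ of processed queries. At $t=0$ the candidate set is the full sphere $\{p:\dist(o,p)=R\}$, $P_0=\{o\}$ is the public ball, and the archive is empty, so every item holds trivially. A cached query changes neither the state nor the archive, so it preserves the invariant. The work lies in the step for a new query.

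\emph{Items 1 and 2.} Nestedness and the representation \cref{eq:state-candidates} after a cut follow from the recentering identity \cref{eq:cd-recentering}. Expanding $p_v$ with $v=-\eta e+\sigma w$ gives exactly $\cosh S_{t+1}c_{t+1}+\sinh S_{t+1}w$. Moreover $c_{t+1}\in V_{t+1}$ and $w\perp_{\mathrm L}W_{t+1}$. The distance to $o$ is unchanged, since the candidates remain physical points of the old sphere.

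For item 2, $P_{t+1}$ is the closed convex hull of a compact set inside $V_{t+1}$, and it contains the new ball by definition. The new contact $z$ lies in $P_t\subset V_t$. Its direction $q=q_V+\beta e$ lies in $W_{t+1}$, because $e$ has been adjoined. If $\beta=0$, then $q=q_V$ is already public. Old contacts stay public because $W_t\subset W_{t+1}$.

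\emph{Item 3.} For the new contact, \Cref{prop:cd-small-latitude} shows directly that $H_v^-$ contains $P_t$, strictly contains $p_v$, and yields the scalar persistence \cref{eq:cd-all-future-scalar}. Any later state satisfies the hypotheses of \Cref{prop:cd-future-balls}. Indeed, by item 2 it has $S'\le S$ and a representation of $p$ whose public subspace contains $z$ and $q$. That proposition therefore gives containment of every future public ball.

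Since $P_{t'}$ is the convex hull of $P_t$ together with those future balls, and halfspaces are convex, $H^-$ contains every $P_{t'}$. Old contacts inherit this property from the inductive hypothesis restricted to surviving $p$. The statement ``as long as $p$ survives'' is used to make this inheritance valid.

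One subtlety must be checked here. The physical normal \cref{eq:archive-normal} depends on $p$, and $\vartheta$ must be candidate-independent on the later, smaller candidate set. This holds because the later set is a subset of the retained set on which \Cref{prop:cd-small-latitude} established independence.

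\emph{Item 4.} Fix a surviving $p$ and $K=K_{t,p}$. For each archived contact $i$, I claim $n_{i,p}\in N_K(z_i)$. By \cref{eq:sign}, this amounts to $\ipL{n_{i,p}}{x}\le0$ on $K$. That inequality holds on $P_t$ and at $p$ by item 3, and it extends to the closed convex hull because a Lorentz halfspace intersected with $\HH^d$ is geodesically convex and closed. The normal has the form $q_i+\mu u_{z_i,p}$ with $q_i$ unit.

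Now apply \Cref{lem:contact-realization}. For $\ell_i>0$, we have $y_i=\exp_{z_i}(\ell_iq_i)$, so $F_{t,p}(y_i)=\mu D_i+\ell_i$ and $-\log_{y_i}z_i/\ell_i\in\partial F_{t,p}(y_i)$. For $\ell_i=0$, we get value $\mu D_i$ and subgradient $q_i$. In both cases this matches \cref{eq:oracle-answer}, using $D_i=\dist(z_i,p)$ from \cref{eq:cd-public-distance}, which is valid since $p$ belongs to the candidate sphere of stage $i$.

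I expect item 3 to be the main obstacle. It requires checking that every later stage's representation $(c',S',W')$ truly satisfies the hypotheses of \Cref{prop:cd-future-balls} for every old contact, including through stages with no cut. The other delicate point is the well-definedness of the archived scalar $\vartheta_i$ as $p$ ranges over the shrinking candidate sets.
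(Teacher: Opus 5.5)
Your proposal is correct and follows essentially the same route as the paper. Both arguments prove items 1--2 by induction, using the substitution $v=-\eta e+\sigma w$. Item 3 comes from \Cref{prop:cd-small-latitude}, \Cref{prop:cd-future-balls}, and convexity of halfspaces, with old contacts keeping their certificates because $z_i,q_i$ stay public, $D_i,\vartheta_i$ are fixed, and radii only decrease. Item 4 follows by showing $n_{i,p}\in N_{K_{t,p}}(z_i)$ and applying \Cref{lem:contact-realization} to the single carrier $K_{t,p}$.

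Two small points. The bound $S'\le S$ comes from the radius recurrence, not from item 2. The paper also explicitly checks that the new center $c_{t+1}$ is a future-pointing unit timelike vector, which you only assert.
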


\begin{proof}
The initial assertions are immediate. For the candidate update, write
$v=-\eta e+\sigma w$, with $w\in W_t^{\perp_{\mathrm L}}\cap e^\perp$
unit, and substitute into \cref{eq:state-candidates}. This gives
\cref{eq:oracle-update} exactly. It changes only the representation and
restricts the index set; it does not move any surviving physical minimizer.
The new Lorentz center is a future-pointing unit timelike vector since
\[
 \ipL{\cosh S_t c_t-\eta\sinh S_t e}
      {\cosh S_t c_t-\eta\sinh S_t e}
 =-\bigl(1+\sigma^2\sinh^2S_t\bigr)
 =-\cosh^2S_{t+1}.
\]
The time orientation is also preserved, because the Lorentz inner product
of the numerator with the future-pointing $c_t$ is $-\cosh S_t<0$.
The public-domain update preserves compactness, convexity, and the archive,
and installs the new ball. When a cut is made, $q_i\in W_{t+1}$ by its
decomposition. For $\beta=0$ it already belongs to $W_t$. All later public
subspaces contain it. Contact points belong to the public domain at the
time of their creation, and thus remain public forever.

Consider a new contact. By \cref{prop:cd-small-latitude}, its halfspace
contains the \emph{whole} old public domain $P_t$ and the retained $p$.
By \cref{prop:cd-future-balls}, it contains the newly installed ball and
every later public ball. For an old contact, the same future-ball
certificate from its creation applies to the newly installed ball: its
old $z_i,q_i$ are still public, its $D_i$ and $\vartheta_i$ are unchanged,
and residual radii only decrease. Since a halfspace is convex, containing
the old public domain and the new ball implies containing their convex
hull. This establishes the archive invariant by induction. Zero-cut and
cached steps do not weaken it.

For clarity, $D_i$ and $\vartheta_i$ are fixed under later updates because
later candidate sets only restrict the same physical points $p$. Already at
the creation of contact $i$, $D_i$ was common to all candidates and the
chosen latitude made $\vartheta_i$ common as well. Thus the physical
normal \cref{eq:archive-normal} is never transported approximately or
replaced by an unrelated vector.

Finally, each $H_{i,p}^-$ contains $P_t$ and $p$, hence contains $K_{t,p}$.
Its normal is therefore in $N_{K_{t,p}}(z_i)$. Apply
\cref{lem:metric-carrier,lem:contact-realization} to the \emph{same}
$K_{t,p}$ for every $i$. This proves simultaneous exact values and selected
subgradients. In particular it proves that the proposed oracle has a legal
completion after every prefix, not just at the terminal time.
\end{proof}

\begin{remark}[Exterior queries remain outside the carrier]\label[remark]{rem:boundary}
All old query directions eventually become public. An external query can
therefore enter the enlarged totally geodesic public span. It still stays
outside the convex carrier $K_{t,p}$: its old supporting normal satisfies
\[
 \ipL{n_{i,p}}{y_i}
 =\sinh\ell_i\,(1-\mu\vartheta_i)>0
 \quad(\ell_i>0),
\]
whereas it is nonpositive throughout $K_{t,p}$. This is the mechanism that
preserves the norm-one external answer. Publicizing a direction does not
force the query into the convex carrier's interior.
\end{remark}

\subsection{Radius, query gaps, and the final output}

Let $k_t\le t$ be the number of actual cuts by time $t$. The exact update
gives the radius recurrence of Criscitiello and
Boumal~\cite[Eq.~(7), Lemma~24]{CB23}:
\begin{equation}\label{eq:radius-budget}
 \sinh S_t=\sigma^{k_t}\sinh R.
\end{equation}
With $T$ from \cref{eq:main-parameters}, for all $t\le T$,
\begin{align*}
 k_t(-\log\sigma)
 &\le T(-\log\sigma)
 \le\log(2\cosh(R/2))\\
 &=\log\frac{\sinh R}{\sinh(R/2)}.
\end{align*}
Thus $S_t\ge R/2$. At a new contact, $D_i\ge S_t$, so every returned
value obeys
\begin{equation}\label{eq:query-gap}
 F_{y_i}=\mu D_i+\ell_i\ge\mu S_t\ge2\eps R.
\end{equation}
Cached values have the same property. The completed functions all have
minimum zero.

Suppose the algorithm stops after $t\le T$ queries and outputs any
$\widehat x\in\HH^d$. At least one residual dimension remains. Choose a
unit $v\in W_t^{\perp_{\mathrm L}}$ such that
\begin{equation}\label{eq:output-choice}
 \ipL{v}{\widehat x}\le0,
\end{equation}
and set $p=\cosh S_t c_t+\sinh S_t v$. Such a choice exists by taking one
of the two signs of any residual direction (or the direction opposite the
residual projection of $\widehat x$). Then
\begin{align*}
 \cosh\dist(\widehat x,p)
 &=-\cosh S_t\ipL{\widehat x}{c_t}
        -\sinh S_t\ipL{\widehat x}{v}\\
 &\ge\cosh S_t\cosh\dist(\widehat x,c_t)
 \ge\cosh S_t.
\end{align*}
Consequently $\dist(\widehat x,p)\ge S_t$. Select the final objective
$F_{t,p}$ from \cref{eq:prefix-fixed-objective}. Its floor gives
\begin{equation}\label{eq:output-gap}
 F_{t,p}(\widehat x)\ge\mu S_t\ge2\eps R.
\end{equation}

This is a valid deterministic resisting-oracle argument with a single fixed
objective. Explicitly, first simulate the deterministic algorithm using the
public replies above. After it stops, choose $p$ by \cref{eq:output-choice}
and fix $F_{t,p}$. Give each queried point its cached selected subgradient;
on unqueried points select the unique minimum-norm element of the
subdifferential, whose existence is proved in \cref{lem:selection}. This defines
one fixed oracle map. Rerunning the deterministic algorithm against this
fixed objective and oracle produces exactly the simulated transcript,
by induction on the queries, and hence the same failed output.

The nontermination alternative also admits a fixed-instance witness.
If the simulation requests query $T+1$ before returning an output, choose
any $p\in\mathcal C_T$ and fix the completion $F_{T,p}$ and its oracle at
that prefix. Rerunning reproduces all first $T$ replies and the same
decision to continue, so the algorithm exceeds the proposed query budget
on a fixed legal instance. If an algorithm instead stalls after any finite
prefix of length $t\le T$, fix that prefix's completion; its same internal state then
also fails to return an output. Thus every algorithm either returns an
inaccurate output within the budget or fails its budget-and-output
guarantee on a fixed instance.

\subsection{Explicit uniform constant}

For $0<\eps\le1/64$ and $\mu=4\eps$,
\[
 \eta^2=\frac{64\eps^2}{(1-16\eps^2)^2}\le65\eps^2<\frac12,
 \qquad
 -\log\sigma=-\frac12\log(1-\eta^2)\le\eta^2.
\]
Also,
\[
 \zeta_R=R+\frac{2R}{e^{2R}-1}\le R+1,
\]
and
\begin{align*}
 \log(2\cosh(R/2))
 &\ge\max\{\log2,R/2\}\\
 &\ge\frac{\log2+R/2}{2}
 \ge\frac{R+1}{4}\ge\frac{\zeta_R}{4}.
\end{align*}
Therefore
\[
 T+1>
 \frac{\log(2\cosh(R/2))}{-\log\sigma}
 \ge\frac{\zeta_R}{260\eps^2}.
\]
Since every algorithm guaranteeing an output after at most $T$ queries
fails on some legal instance, $Q_d^\star(R,\eps)\ge T+1$ for $d=T+2$.
For an algorithm receiving $L$-Lipschitz objectives, run the same public
simulation with each proposed value and vector multiplied by $L$;
the final completion is $L F_{t,p}$. Thus the simulation remains valid
even if the algorithm changes its queries in response to this scaling.
This proves
the lower half of \cref{thm:main}.

\section{Matching Upper Bound}\label{sec:upper}

For completeness, we prove the projected Riemannian subgradient bound
of Zhang and Sra~\cite[Theorem~9]{ZS16} with explicit constants for our
localization convention. The only curvature loss comes from the Hessian
of squared distance along an unprojected step. Projection keeps the
queried iterates in the known ball, and selecting the smallest observed
value gives an executable output without knowing the optimum value.

\begin{proposition}[Projected subgradient upper bound]\label[proposition]{prop:upper}
For all $d\ge2$, $R,L>0$, and $0<\eps<1$,
\[
 Q_d^\star(R,\eps)\le
 \left\lceil\frac{\zeta_{3R}}{\eps^2}\right\rceil
 \le\left\lceil\frac{3\zeta_R}{\eps^2}\right\rceil.
\]
The algorithm needs no knowledge of the minimum value.
\end{proposition}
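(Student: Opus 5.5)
We analyze the standard projected Riemannian subgradient method on the closed ball $\B(o,R)$, with constant step size. The output is the best queried point. By the $L$-scaling simulation already noted in \Cref{sec:model}, we may assume $L=1$.

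\textbf{Iteration.} Set $x_0=o$. For $t=0,\dots,N-1$, query $x_t$ and receive $(f(x_t),g_t)$. Then put
\[
 x_{t+1}=\Pi_{\B(o,R)}\bigl(\exp_{x_t}(-s\,g_t)\bigr),
\]
where $\Pi$ is the metric projection onto the geodesically convex ball. After $N$ queries we output the queried $x_t$ with the smallest observed value. This needs no knowledge of $\min f$. By \Cref{lem:selection}, every selected $g_t$ satisfies $\norm{g_t}\le1$, whatever legal selection the oracle uses.

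\textbf{One-step estimate.} Fix a minimizer $x^\star\in\B(o,R)$. Every iterate lies in $\B(o,R)$, so $\dist(x_t,x^\star)\le2R$. We compare $\dist^2$ at the unprojected point $x_t'=\exp_{x_t}(-sg_t)$ with $\dist^2$ at $x_t$ using the Hessian comparison for $\tfrac12\dist^2(\cdot,x^\star)$ in curvature $-1$: its Hessian is bounded by $\zeta(\dist)=\dist\coth\dist$. Along the step geodesic the distance to $x^\star$ is at most $2R+s\norm{g_t}$. We want every such point within distance $3R$ of $x^\star$, so we restrict to $s\le R$; we will check that the final step size obeys this. With that restriction,
\[
 \dist^2(x_t',x^\star)\le\dist^2(x_t,x^\star)+2s\ip{g_t}{\log_{x_t}x^\star}+\zeta_{3R}s^2.
\]
The nonexpansive projection onto a convex set in a Hadamard manifold satisfies $\dist(x_{t+1},x^\star)\le\dist(x_t',x^\star)$, because $x^\star$ lies in the ball. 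The subgradient inequality gives $\ip{g_t}{\log_{x_t}x^\star}\le f(x^\star)-f(x_t)$.

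\textbf{Summation and step size.} Telescoping over $t<N$ and using $\dist(x_0,x^\star)\le R$ gives
\[
 \min_t f(x_t)-f^\star\le\frac{R^2}{2sN}+\frac{\zeta_{3R}s}{2}.
\]
Choose $s=R/\sqrt{\zeta_{3R}N}$. This satisfies $s\le R$, since $\zeta_{3R}\ge1$ and $N\ge1$, so the restriction above holds. The bound becomes $R\sqrt{\zeta_{3R}/N}$, which is at most $\eps R$ once $N=\lceil\zeta_{3R}/\eps^2\rceil$.

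\textbf{Comparing $\zeta_{3R}$ with $\zeta_R$.} The map $\rho\mapsto\rho\coth\rho$ is increasing. Also $\coth$ is decreasing, so $\coth(3R)\le\coth R$ and hence $\zeta_{3R}=3R\coth3R\le3R\coth R=3\zeta_R$. This gives the second inequality of the statement.

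The main obstacle is the curvature term in the one-step estimate. It needs the Hessian comparison to hold uniformly along the unprojected step, including points outside the ball, which is why we impose $s\le R$ and check afterwards that the chosen step size satisfies it. The projection lemma for Hadamard manifolds is standard and will only be cited.
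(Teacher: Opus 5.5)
Your proposal is correct and follows the paper's proof essentially step for step. It uses the same projected iteration from $o$, the same step size $R/\sqrt{\zeta_{3R}N}$ (after your normalization $L=1$), the same Hessian comparison along the unprojected step kept within distance $3R$ of the minimizer, and the same comparison $\zeta_{3R}\le3\zeta_R$. The only difference is cosmetic: you cite nonexpansiveness of the metric projection on Hadamard manifolds, whereas the paper derives $\dist(y,p)^2\ge\dist(y,z)^2+\dist(z,p)^2$ directly from first-order optimality of the projection and the bound $\nabla^2(\tfrac12\dist(\cdot,y)^2)\succeq I$.
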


\begin{proof}
Let $p\in\B(o,R)$ minimize $f$, put
\[
 N=\left\lceil\frac{\zeta_{3R}}{\eps^2}\right\rceil,
 \qquad \alpha=\frac{R}{L\sqrt{\zeta_{3R}N}},
\]
and run
\[
 x_0=o,\qquad y_j=\exp_{x_j}(-\alpha g_j),\qquad
 x_{j+1}=\Pi_{\B(o,R)}(y_j),\qquad 0\le j<N.
\]
Query $(f(x_j),g_j)$ at each $x_j$ and output the queried point with smallest
observed value. Global Lipschitzness and the support inequality imply
$\norm{g_j}\le L$: apply the support in the direction $g_j$ and compare
with the Lipschitz upper bound.

For $h_p(x)=\tfrac12\dist(x,p)^2$, the hyperbolic Hessian has eigenvalue
$1$ in the radial direction and $r\coth r$ in orthogonal directions,
where $r=\dist(x,p)$; at $r=0$ both values are $1$. This follows by
differentiating the polar metric $dr^2+\sinh^2r\,d\theta^2$.
Since $\alpha L\le R$, the unprojected step remains within distance
$3R$ of $p$, so $\nabla^2h_p\preceq\zeta_{3R}I$ along it.
Taylor's bound and subgradient support yield
\begin{align*}
 \dist(y_j,p)^2
 &\le\dist(x_j,p)^2+2\alpha\ip{g_j}{\log_{x_j}p}
                  +\zeta_{3R}\alpha^2L^2\\
 &\le\dist(x_j,p)^2-2\alpha(f(x_j)-f(p))
                  +\zeta_{3R}\alpha^2L^2.
\end{align*}

Projection onto a closed convex set does not increase distance to a point
of that set. Here this also follows directly from the same Hessian formula.
If $z=\Pi_C(y)$ and $p\in C$, minimality along the segment $[z,p]$ gives
$\ip{-\log_z y}{\log_zp}\ge0$. Since
$\nabla^2(\tfrac12\dist(\cdot,y)^2)\succeq I$, integration along that
segment yields
\[
 \dist(y,p)^2\ge\dist(y,z)^2+\dist(z,p)^2.
\]
Thus the preceding recursive inequality also holds with $x_{j+1}$ on the
left. Summing over $j$, using $\dist(o,p)^2\le R^2$, and taking the best
queried value gives
\begin{align*}
 \min_{0\le j<N}(f(x_j)-f(p))
 &\le\frac{R^2}{2\alpha N}+\frac{\zeta_{3R}\alpha L^2}{2}\\
 &=LR\sqrt{\frac{\zeta_{3R}}N}\le\eps LR.
\end{align*}
Finally, $\tanh(3R)\ge\tanh R$, hence $\zeta_{3R}\le3\zeta_R$.
\end{proof}

\section{Discussion}\label{sec:scope}

The theorem identifies the joint curvature--accuracy cost in the
dimension-uniform deterministic model. Its lower bound matches the
established projected-subgradient rate, including the curvature factor,
throughout a universal small-accuracy range. The proof also explains a
structural source of difficulty: a global objective can retain an entire
archive of exact first-order information while its possible minimizers
lose only $O(\eps^2)$ of a geometric potential per query.

Several boundaries of the characterization are substantive. The hard
dimension is $d=T+2$, so the theorem does not prescribe the best rate at
each fixed dimension. Combining the available algorithms gives the
dimension-sensitive upper envelope
\[
 Q_d^\star(R,\eps)
 \le O\!\left(\min\left\{
 \zeta_R\eps^{-2},\ d^2[R+\log(e/\eps)]
 \right\}\right),\qquad 0<\eps<1,
\]
using \Cref{prop:upper} and the Klein cutting-plane guarantee~\cite{Klein26}.
The present lower bound determines the supremum over dimensions, rather
than the full dependence on $d$. It also uses deterministic replay;
extending the conclusion to randomized algorithms would require an
argument controlling their randomness on fixed instances. The choice
of a final candidate after a deterministic transcript does not itself
provide such an extension.

The function class assumes global geodesic convexity and global
Lipschitz continuity, with exact ordinary first-order access. Additional
smoothness, horospherical support structure, or stronger oracles lead
to different questions. Our persistence calculation also exploits
constant-curvature hyperbolic identities, so it does not establish the
same construction on every Hadamard manifold. Finally, the lower-bound
constants are proved for $0<\eps\le1/64$. For $\eps\ge1$, the known
reference point already satisfies the target without a query because
$f(o)-\min f\le LR$. We do not optimize the transition between these
accuracy regimes.

The result completes the unrestricted Lipschitz case of the program
initiated by Criscitiello and Boumal~\cite{CB23}. Their problem
formulation, conjectured rate, and nested hyperbolic geometry provide
the foundation; the carrier and persistence arguments above supply a
way to realize that geometry with exact oracle consistency for arbitrary
queries. Criscitiello~\cite{C26} independently reaches the same
complexity characterization through the different construction discussed
in \Cref{sec:related}.

\paragraph{Acknowledgments.}
We thank Christopher Criscitiello for bringing the precise correspondence
with the geometric construction in Criscitiello and Boumal~\cite{CB23}
to our attention and for helpful correspondence about the independent
concurrent results.

\end{document}